\documentclass[11pt]{article}

\usepackage{amsmath,amssymb,amsthm,mathtools,bm}
\usepackage[margin=1in]{geometry}
\usepackage[authoryear,round]{natbib}
\usepackage{url}
\usepackage{enumitem}
\usepackage{booktabs}
\usepackage{microtype}
\usepackage{lineno}

\newtheorem{theorem}{Theorem}[section]
\newtheorem{proposition}[theorem]{Proposition}
\newtheorem{lemma}[theorem]{Lemma}
\newtheorem{corollary}[theorem]{Corollary}
\theoremstyle{remark}
\newtheorem{remark}[theorem]{Remark}

\newcommand{\R}{\mathbb R}

\newcommand{\Ep}{\mathbb E}
\newcommand{\ind}{\mathbf 1}
\newcommand{\thet}{\boldsymbol\theta}
\newcommand{\thetao}{\boldsymbol\theta_0}
\newcommand{\Xv}{\mathbf X}
\newcommand{\xv}{\mathbf x}

\title{Fisher Information and Differentiability for $r$-Largest Generalized
Extreme-Value Models}
\author{Jihong Park$^{1}$, Yire Shin$^{2,*}$, Jeong-Soo Park$^{2}$\\[1ex]
\small $^1$Department of Mathematics and Statistics, Chonnam National University, Gwangju 61186, Korea\\
\small $^2$Department of Statistics, Chonnam National University, Gwangju 61186, Korea\\
\small $^*$Corresponding author: shinyire87@gmail.com}
\date{}

\begin{document}
\maketitle

\begin{abstract}
	The $r$ largest observations of a block form a dependent random vector whose
	support moves with all three parameters of the $r$-largest generalized
	extreme-value ($r$GEV) model. For a fixed number $r$ of order statistics, we show
	that the model is differentiable in quadratic mean if and only if $k_0<1/2$, where
	$k=-\gamma$ is the shape parameter; the threshold is thus the same as for block
	maxima, because only the largest or the smallest retained order statistic can
	reach the moving boundary. In this range the Fisher information is positive
	definite and decomposes into the block-maximum information plus contributions of
	the lower order statistics, and it equals the negative expected Hessian, which we
	prove without using any estimator. The model is then locally asymptotically
	normal, whereas for $k_0\ge1/2$ root-$n$ local asymptotic normality fails.
	Numerically, using the five largest order statistics instead of the block
	maximum alone lowers the efficient asymptotic standard error by about $45\%$ for the
	shape parameter and by $20$--$24\%$ for the 100-year return level in
	heavy-tailed cases ($k_0\le-0.2$), but by only about $2\%$ for the return level
	when $k_0=0.4$.
\end{abstract}

\medskip
\noindent\textbf{Keywords:} differentiability in quadratic mean; extreme-value
analysis; Fisher information; local asymptotic normality; moving support;
$r$-largest order statistics.

\noindent\textbf{MSC 2020:} Primary 62F12; Secondary 62G32, 62E20.

\section{Introduction}
\label{sec:introduction}

Likelihood inference for the three-parameter generalized extreme-value (GEV)
distribution is not a routine application of regular theory, because the
support depends on the parameters. \citet{prescott1980maximum} derived the
information matrix, \citet{Smith1985} showed for families with moving
endpoints that the local behavior of the density at the boundary decides whether
root-$n$ normal theory holds, and \citet{ChengTraylor1995} surveyed such
non-regular problems. For the GEV family, \citet{buchersegers2017maximum}
proved differentiability in quadratic mean (DQM) exactly for $\gamma>-1/2$, which
is $k<1/2$ in the convention $k=-\gamma$ used here, and noted that local
asymptotic normality (LAN) follows. A complementary line of work
treats the GEV law as an approximation to block maxima
\citep{dombry2015existence,dombry2019maximum}.

The $r$GEV model, which uses the $r$ largest observations of each block
\citep{smith1986extreme,tawn1988extreme}, is widely used for environmental
extremes \citep{soares2004application,an2007r,wang2008downscaling,hamdi2014extreme,muhammed2017variations,e2020change}.
The model has been extended in several directions
\citep{shin2023modeling,ShinPark2024Serra,ShinPark2025SciRep,Busaba2025CSAM,Busababodhin2026,silva2022dynamic}
and is available in R \citep{ShinParkRpackageevmr,bader2020package}. The
choice of $r$ is studied by \citet{bader2017automated} and \citet{ShinParkPark2026}.
Related recent work concerns serial dependence \citep{BucherHaufs2026} and the
dependence among the largest order statistics \citep{heinonen7451430dependence}.
The Fisher information contained in ordered data has been studied for
families with fixed support, for example for generalized order statistics, which
include ordinary order statistics and record values \citep{BurkschatCramer2012}.

A regularity theory for the exact three-parameter $r$GEV likelihood, however,
has not been available, and it does not follow from the case $r=1$. The
observation is an $r$-dimensional random vector whose coordinates are dependent
through their ordering, and its joint support is described by $r$ endpoint
inequalities that move with all three parameters. Whether the boundary of this
multivariate support can make the model less regular than the block-maximum model,
and how the information carried by the lower order statistics is structured, are
the questions addressed here.

This paper answers the first question of such a theory, which concerns the model
itself rather than any estimator. We prove that the fixed-$r$ $r$GEV family is
DQM if and only if $k_0<1/2$ (Theorem~\ref{thm:dqm}); that its Fisher
information decomposes sequentially and is positive definite
(Proposition~\ref{prop:info-pd}); that the information identity holds
(Theorem~\ref{thm:info-identity}); and that the model is locally asymptotically
normal, with the corresponding lower bound, while local asymptotic normality
fails for $k_0\ge1/2$ (Corollary~\ref{cor:lan}, Remark~\ref{rem:lan-failure}).
Thus the regularity threshold does not depend on $r$: the leading maximum
determines the boundary of regular behavior, while lower order statistics can
only add information.

For $r=1$, the DQM characterization and LAN are due to
\citet{buchersegers2017maximum}, and our proof of sufficiency follows their
moving-support argument. The new elements are the following:
(i) for every fixed $r$, only $X^{(1)}$ (if $k>0$) or $X^{(r)}$ (if $k<0$) can
reach the boundary of the joint support, so that the threshold is unchanged;
(ii) the martingale structure of the conditional scores, which yields the
sequential information decomposition;
(iii) a proof of the information identity that uses no estimator and no
differentiation under a moving integral; and
(iv) an explicit proof that the block-maximum information $I_1$ is positive
definite; and (v) a numerical evaluation of the resulting information gain
(Section~\ref{sec:numerics}): from $r=1$ to $r=5$ the efficient asymptotic
standard error decreases by about $45\%$ for the shape parameter and by
$20$--$24\%$ for the 100-year return level in heavy-tailed cases, whereas for
$k_0>0$ the latter gain is small. The asymptotic theory of the maximum likelihood estimator and of
likelihood-based inference, which builds on these results, is the subject of
ongoing work. 

Section~\ref{sec:model} introduces the model,
Section~\ref{sec:results} states the results, Section~\ref{sec:numerics} evaluates
the information gain, and Section~\ref{sec:proofs} contains the main proofs; the
remaining technical lemmas and the moving-strip part of the DQM proof are in the
appendices.

\section{The fixed-$r$ $r$GEV model}
\label{sec:model}

Let $r\ge1$ be fixed, and let $\Xv_1,\ldots,\Xv_n$ be independent copies of
$\Xv=(X^{(1)},\ldots,X^{(r)})$, the $r$ largest order statistics of a block.
The order statistics within a block are dependent through their ordering, and
the blocks are assumed independent. Within blocks, the $r$GEV form presupposes
that the retained values are not clustered, which in applications is achieved
by suitable declustering \citep{tawn1988extreme}.

Let $\thet=(\mu,\sigma,k)$ with $\sigma>0$, put
$w_{\thet}(x)=1-k(x-\mu)/\sigma$ and, for $k\ne0$,
$T_{\thet}(x)=w_{\thet}(x)^{1/k}$, with the continuous limit
$T_{(\mu,\sigma,0)}(x)=\exp\{-(x-\mu)/\sigma\}$. The GEV distribution function is
$F_{\thet}(x)=\exp\{-T_{\thet}(x)\}$.

\begin{remark}[Shape-parameter convention]
\label{rem:shape-convention}
This is the convention of \citet{prescott1980maximum} and \citet{tawn1988extreme}.
The convention $\exp[-\{1+\gamma(x-\mu)/\sigma\}^{-1/\gamma}]$ of, e.g.,
\citet{coles2001introduction} is related to it by
\begin{equation}
\gamma=\xi=-k,
\label{eq:shape-sign-conversion}
\end{equation}
so $\gamma>-1/2$ corresponds to $k<1/2$, and $k>0$ to a finite upper endpoint
$\omega(\thet)=\mu+\sigma/k$.
\end{remark}

The joint density of the $r$ largest order statistics is
\begin{equation}
 p_{r,\thet}(x_1,\ldots,x_r)
 =\sigma^{-r}\exp\{-T_{\thet}(x_r)\}
 \prod_{j=1}^r T_{\thet}(x_j)^{1-k}
 \label{eq:rgev-density}
\end{equation}
on the support
$S_{r,\thet}=\{\xv\in\R^r:x_1>\cdots>x_r,\ w_{\thet}(x_j)>0,\ j=1,\ldots,r\}$. We
write $\ell_{r,\thet}=\log p_{r,\thet}$ on $S_{r,\thet}$, with
$\ell_{r,\thet}=-\infty$ outside it, and $P_{r,\thet}$, $\Ep_{\thet}$ for the
corresponding law and expectation. At the true parameter $\thetao$, the
variables $T_j=T_{\thetao}(X^{(j)})$ are the first $r$ arrival times of a
unit-rate Poisson process \citep{Weissman1978,Resnick1987}:
\begin{equation}
T_1,\ T_2-T_1,\ldots,T_r-T_{r-1}
\stackrel{\rm ind}{\sim}\operatorname{Exp}(1),
\label{eq:arrival}
\end{equation}
so $T_j\sim\operatorname{Gamma}(j,1)$; this law does not depend on $\thetao$. In
terms of $T_j=T_{\thet}(x_j)$,
\begin{equation}
\ell_{r,\thet}
=-r\log\sigma-T_r+(1-k)\sum_{j=1}^r\log T_j.
\label{eq:loglik-T}
\end{equation}
Let $D_{\thet}(x)=\nabla_{\thet}\log T_{\thet}(x)$. For $k\ne0$,
\begin{equation}
D_{\thet}(x)=
\begin{pmatrix}
T_{\thet}(x)^{-k}/\sigma\\[1mm]
\{T_{\thet}(x)^{-k}-1\}/(k\sigma)\\[1mm]
-\log T_{\thet}(x)/k-
\{T_{\thet}(x)^{-k}-1\}/k^2
\end{pmatrix},
\label{eq:D}
\end{equation}
with continuous limits at $k=0$, and, writing $D_j=D_{\thet}(x_j)$, the score is
\begin{equation}
\dot{\boldsymbol\ell}_{r,\thet}
=-\frac r\sigma\bm e_\sigma
-T_rD_r
+(1-k)\sum_{j=1}^rD_j
-\bm e_k\sum_{j=1}^r\log T_j.
\label{eq:joint-score}
\end{equation}
Put $\Theta_{1/2}=\{(\mu,\sigma,k):\sigma>0,\ k<1/2\}$.

\section{Main results}
\label{sec:results}

Let $s_{\thet}=p_{r,\thet}^{1/2}$, extended by zero outside $S_{r,\thet}$. The
family is DQM at $\thetao$ if there is a measurable $\dot{\boldsymbol\ell}_{\thetao}$
with
$\int\{s_{\thetao+\bm h}-s_{\thetao}-\tfrac12\bm h^T\dot{\boldsymbol\ell}_{\thetao}
s_{\thetao}\}^2d\xv=o(\|\bm h\|^2)$ as $\bm h\to\bm 0$
\citep[Section~7.2]{vanderVaart1998}. The definition does not require fixed
supports, which makes it the appropriate notion here.

\begin{theorem}[Differentiability in quadratic mean]
\label{thm:dqm}
For fixed $r\ge1$, the three-parameter $r$GEV family is
differentiable in quadratic mean at
$\thetao=(\mu_0,\sigma_0,k_0)$ if and only if $k_0<1/2$.
When this condition holds, the quadratic-mean score agrees
with the ordinary likelihood gradient $\dot{\boldsymbol\ell}_{r,\thetao}$ on the
support and vanishes outside it.
\end{theorem}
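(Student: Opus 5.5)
We prove sufficiency and necessity separately, following the moving-support argument of \citet{buchersegers2017maximum} for $r=1$ and keeping track of which coordinate can reach the boundary.

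\textbf{Where the boundary is reached.} Since $w_{\thet}$ is monotone in $x$ and $x_1>\cdots>x_r$, the $r$ endpoint inequalities in $S_{r,\thet}$ reduce to a single binding one. For $k>0$ the binding constraint is $x_1<\omega(\thet)$. For $k<0$ it is $x_r>\mu+\sigma/k$. For $k=0$ the support is the full cone $\{x_1>\cdots>x_r\}$. Hence the symmetric difference of $S_{r,\thetao+\bm h}$ and $S_{r,\thetao}$ is a strip of width $O(\|\bm h\|)$ in a single coordinate only.

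\textbf{Sufficiency ($k_0<1/2$).}

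1. On the interior we use a sufficient criterion for DQM: $\thet\mapsto s_{\thet}(\xv)$ is continuously differentiable, and the information $\Ep_{\thet}\|\dot{\boldsymbol\ell}_{r,\thet}\|^2$ is finite and continuous near $\thetao$.

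2. To show finiteness we pass to the variables $T_j$, whose law is $\operatorname{Gamma}(j,1)$ and free of $\thetao$. By \eqref{eq:D}, each component of $D_j$ is a combination of $T_j^{-k}$, $\log T_j$ and constants. Moments of $T_j^{-2k}$, $T_j^{2-2k}$ and $\log^2T_j$ are then finite, because $\Ep T_j^{-2k}<\infty$ iff $2k<j$.

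3. The most stringent case is $j=1$, which requires $k<1/2$. Lower order statistics ($j\ge2$) impose weaker conditions, since $\Ep T_r^2T_r^{-2k}$ etc.\ are finite.

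4. Near the strip we split the integral $\int\{s_{\thetao+\bm h}-s_{\thetao}-\tfrac12\bm h^T\dot{\boldsymbol\ell}s_{\thetao}\}^2$ into two parts. The first is over the common support, handled by a mean-value argument on the segment $\thetao+t\bm h$ with dominated convergence restricted to points a distance $\gg\|\bm h\|$ from the boundary. The second is over the moving strip, bounded by $\int_{\rm strip}(p_{\thetao+\bm h}+p_{\thetao})$.

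5. For $k_0>0$, integrating out $x_2,\dots,x_r$ shows the strip mass equals the block-maximum mass near $\omega$. That mass behaves like $(\omega-x)^{1/k-1}$ integrated over width $\|\bm h\|$, i.e.\ $O(\|\bm h\|^{1/k})=o(\|\bm h\|^2)$ iff $k<1/2$.

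6. For $k_0<0$, the strip is at the lower end and concerns $x_r$. The marginal density of $X^{(r)}$ there is $\propto T^{r}e^{-T}\cdot T^{-k}/\sigma$ with $T\to\infty$, decaying faster than any power, so the strip mass is negligible.

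7. At $k_0=0$ the support is fixed and only the limit in step 2 is needed.

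8. The resulting quadratic-mean score is the pointwise gradient \eqref{eq:joint-score} on the support and zero off it, which gives the last assertion of the theorem.

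\textbf{Necessity ($k_0\ge1/2$).} DQM implies that the score has finite second moment. Suppose DQM held. The $\mu$-component of the score, $-T_rT_r^{-k}/\sigma+(1-k)\sum_jT_j^{-k}/\sigma$, contains $T_1^{-k}$, and $T_1^{-2k}$ is not integrable for $k\ge1/2$. We must exclude cancellation with the other terms. Conditionally on $T_2,\dots,T_r$, the term $T_1^{-k}$ is the only one singular as $T_1\to0$, so $\Ep\|\dot{\boldsymbol\ell}\|^2=\infty$.

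A more robust alternative is to compare the Hellinger distance along $\bm h=(h,0,0)$ with the $r=1$ case. By the data-processing inequality the Hellinger distance for $\Xv$ is at least that for $X^{(1)}$. For $r=1$ it is $\gg h^2$ when $k_0>1/2$ and $\asymp h^2\log(1/h)$ when $k_0=1/2$ \citep{buchersegers2017maximum}. This contradicts the DQM bound $H^2=O(h^2)$.

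I prefer this second route, since it imports the $r=1$ necessity result directly.

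\textbf{Main obstacle.} The main obstacle is the interior dominated-convergence step near the boundary when $k_0\in(0,1/2)$. There the score behaves like $(\omega-x_1)^{-1}$ and is not uniformly dominated as $\thet$ varies. We plan to adapt the \citet{buchersegers2017maximum} truncation at distance $C\|\bm h\|$ from the endpoint, uniformly in $(x_2,\dots,x_r)$. Conditioning on $X^{(1)}$ should make this a one-dimensional problem: given $X^{(1)}$, the lower coordinates have a smooth conditional density on $\{x_2<x_1\}$.
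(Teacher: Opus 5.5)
Your sufficiency outline (common support plus a moving strip of width $O(t)$ carrying mass $O(t^{1/k})$) is the paper's argument, and your preferred necessity route is valid. Several steps still need correcting.

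\emph{Necessity.} Your first route fails at $k_0=1$. There $1-k_0=0$, the density is $\sigma^{-r}e^{-T_r}$ on the support, and the pointwise score is square integrable: its $\mu$-component is the constant $-1/\sigma_0$. DQM fails at $k_0=1$ because the density jumps at the moving endpoint, not because the information is infinite. The Hellinger route is sound, since data processing gives $H_r^2\ge H_1^2$ and DQM gives $H_r^2=O(h^2)$. At $k_0=1/2$, however, it rests on the imported sharp rate $h^2\log(1/h)$, because the elementary bound $H_1^2\ge P_{\thet_h}\{X^{(1)}\ge\omega_0\}\asymp h^2$ is compatible with $O(h^2)$. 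The paper avoids both issues by bounding the DQM remainder itself. On $\{p_{r,\thetao}=0\}$ the integrand equals $s_{\thet_t}^2$, so the remainder is at least $1-\exp[-(k_0t/\sigma_0)^{1/k_0}]$, which is not $o(t^2)$ for any $k_0\ge1/2$.

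\emph{Sufficiency.} The step you call the main obstacle needs no domination. Let $\alpha_n=\inf_u\omega(\thet_{n,u})$. On $\{x_1<\alpha_n\}$ the whole segment lies in the support, and Cauchy--Schwarz with Fubini gives
$\int_{\{x_1<\alpha_n\}}\{(s_{\thet_n}-s_{\thetao})/t_n\}^2d\xv\le\frac14\int_0^1\bm h_n^TI_r(\thet_{n,u})\bm h_n\,du\to\frac14\bm h^TI_r(\thetao)\bm h$.
The strip contributes $o(1)$ and the quotient converges a.e., so the $\limsup$ of the $L^2$ norms is at most the norm of the limit. Proposition~2.29 of \citet{vanderVaart1998} then gives $L^2$ convergence.

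This argument uses \emph{continuity} of $I_r$ near $\thetao$, which your step~2 (finiteness only) does not supply. It follows from the parameter-free arrival law together with an envelope that is uniform over a compact neighbourhood (Lemma~\ref{lem:info-cont}). Your truncation at distance $Ct_n$ can also be completed. For $\omega_0-x_1\ge Ct_n$ with $C$ large, the distances to all endpoints along the segment are comparable. This gives an $n$-free envelope of order $(\omega_0-x_1)^{1/(k_0+\epsilon)-3}$ up to logarithms, which is integrable for $k_0+\epsilon<1/2$.

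Your step~1 criterion (Lemma~7.6 of van der Vaart) is global: it needs $s_{\thet}(\xv)$ to be $C^1$ in $\thet$ for every $\xv$, across the moving boundary. For $1/3\le k_0<1/2$ this fails, since $\|\nabla s_{\thet}\|$ behaves like $T_1^{(1-3k)/2}$ and does not tend to zero at the boundary. This is why the paper applies that lemma only for $k_0<1/3$, using Lemma~\ref{lem:zero-extension}, and uses the display above otherwise.

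Finally, step~7 is wrong. At $k_0=0$, every $\bm h$ with $h_k\ne0$ gives $P_{\thetao+\bm h}$ a finite endpoint $\mu+\sigma/h_k$, so the support is not fixed near $\thetao$. The conclusion survives, because that endpoint lies at distance of order $1/|h_k|$, where both laws have super-polynomially small mass. But this has to be argued, by your strip argument or by the zero-extension lemma.
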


\begin{proposition}[Sequential Fisher-information decomposition]
\label{prop:info-pd}
For fixed $r\ge1$ and $k_0<1/2$, the Fisher information
$I_r(\thetao)=\Ep_{\thetao}\dot{\boldsymbol\ell}_{r,\thetao}\dot{\boldsymbol\ell}_{r,\thetao}^T$
is finite and positive definite. More precisely,
\[
I_r(\thetao)=I_1(\thetao)
+\sum_{s=2}^r\Ep_{\thetao}(\bm c_s\bm c_s^T)
\succeq I_1(\thetao)>0,
\]
where $\bm c_s$ denotes the score of the conditional
density of $X^{(s)}$ given $X^{(s-1)}$.
\end{proposition}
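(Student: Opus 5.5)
The plan is to use the Markov structure of the order statistics. Given $X^{(s-1)}=x_{s-1}$, the variable $X^{(s)}$ has conditional density $p_{\thet}(x_s\mid x_{s-1})=\sigma^{-1}T_{\thet}(x_s)^{-k}\exp\{-(T_{\thet}(x_s)-T_{\thet}(x_{s-1}))\}$ on $\{x_s<x_{s-1},\ w_{\thet}(x_s)>0\}$. This follows from \eqref{eq:rgev-density} and can be checked against \eqref{eq:arrival}. The joint density therefore factorizes as $p_{1,\thet}(x_1)\prod_{s=2}^r p_{\thet}(x_s\mid x_{s-1})$, and the score \eqref{eq:joint-score} splits on the support as $\dot{\boldsymbol\ell}_{r,\thet}=\bm c_1+\sum_{s=2}^r\bm c_s$. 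Here $\bm c_1$ is the GEV score of $X^{(1)}$, and
\[
\bm c_s=-\frac1\sigma\bm e_\sigma-T_sD_s+T_{s-1}D_{s-1}+(1-k)D_s-\bm e_k\log T_s .
\]
I would verify this directly by telescoping $T_r D_r=\sum_s(T_sD_s-T_{s-1}D_{s-1})$ in \eqref{eq:joint-score}.

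First, finiteness. At $\thetao$ every $T_j$ follows a Gamma$(j,1)$ law that does not depend on $\thetao$, and each entry of $D_j$ is a polynomial in $T_j^{-k_0}$ and $\log T_j$ (for $k_0=0$, a polynomial in $\log T_j$). Every $\bm c_s$ is thus built from $T_s^{1-k_0}$, $T_s^{-k_0}$, $T_s^{-2k_0}$, $\log T_s$, $T_{s-1}^{1-k_0}$ and similar terms. Squares of these involve $T_s^{-2k_0}$, which is integrable against the Gamma$(s,1)$ density near $0$ iff $2k_0<s$. The binding case is $s=1$, where this requires $k_0<1/2$. For $s\ge2$ the requirement is automatically met, and large values of $T$ cause no problem. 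This gives $I_r(\thetao)<\infty$.

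Second, the martingale (orthogonality) step. I claim $\Ep_{\thetao}(\bm c_s\mid X^{(1)},\dots,X^{(s-1)})=0$. By the Markov property this reduces to $\Ep(\bm c_s\mid X^{(s-1)})=0$. I would not differentiate under the moving integral; instead I would compute the expectation explicitly. Given $T_{s-1}=t$, we have $T_s=t+E$ with $E\sim\mathrm{Exp}(1)$. The identities needed are then $\Ep[T_sD_s-(1-k)D_s+\bm e_k\log T_s\mid t]=T_{s-1}D_{s-1}-\bm e_\sigma/\sigma$. These are integration-by-parts identities of the form $\int_t^\infty (u g'(u)+\dots)e^{-(u-t)}du$, and I would check them componentwise from \eqref{eq:D}. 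The same computation at $s=1$ gives $\Ep\bm c_1=0$ for $k_0<1/2$. Together these make the partial sums a martingale, so the cross terms vanish and $I_r=\Ep\bm c_1\bm c_1^T+\sum_{s\ge2}\Ep\bm c_s\bm c_s^T\succeq I_1$.

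Third, positive definiteness of $I_1$, which I expect to be the main obstacle, since it is the one place where the paper claims a new explicit argument. Suppose $\bm a^T\bm c_1=0$ a.s. for some $\bm a\ne\bm 0$. Written in terms of $t=T_1\in(0,\infty)$, the score $\bm c_1$ is a linear combination of the functions $1$, $t^{-k}$, $t^{1-k}$, $\log t$ and $t\log t$. I would argue that $\bm a^T\bm c_1(t)\equiv0$ on $(0,\infty)$, by continuity, forces $\bm a=0$, because these functions are linearly independent when $k_0\notin\{0,1\}$; here $k_0=1$ is excluded by $k_0<1/2$. The linear-independence argument uses asymptotics as $t\to0$ and $t\to\infty$. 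The case $k_0=0$ would be handled separately with the $\log$-polynomial limits, where the relevant functions are $1$, $\log t$, $t$, $t\log t$ and $(\log t)^2$. The delicate point is tracking the exact coefficients of $\mu,\sigma,k$ so that no cancellation occurs; I would expand $\bm c_1$ explicitly and match leading terms.
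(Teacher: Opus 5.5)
Your decomposition argument is the paper's own. You use the same transition-density factorization, the same conditional-score identity verified by integration by parts against the $\operatorname{Exp}(1)$ spacing on the arrival scale (not by differentiating under a moving integral), and the same martingale orthogonality. Your finiteness check via Gamma$(s,1)$ moments at $\thetao$ is a lighter substitute for the paper's uniform envelope, which is only needed later for continuity. Three small corrections:
\begin{itemize}
\item The conditional density carries $T_{\thet}(x_s)^{1-k}$, not $T_{\thet}(x_s)^{-k}$. Your $\bm c_s$ is already the score of the correct density.
\item Your componentwise check collapses to a single integration by parts once you set $G(u)=uD(u)$. Then $T_sD_s=G(T_s)$ and $(1-k)D_s-\bm e_k\log T_s-\sigma^{-1}\bm e_\sigma=G'(T_s)$, so $\bm c_s=G'(T_s)-G(T_s)+G(T_{s-1})$.
\item $\Ep\bm c_1=0$ is not needed. $I_1$ is a second-moment matrix, and every cross term $\Ep(\bm c_s\bm c_{s'}^T)$ with $s<s'$ vanishes by conditioning on $\mathcal F_{s'-1}$.
\end{itemize}

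The genuine difference is in proving $I_1>0$. The paper argues locally: it Taylor-expands $S_\mu,S_\sigma,S_k$ at $t=1$ and shows that a $3\times3$ matrix of derivatives has determinant $-(k+1)/\sigma^2$. This forces a separate choice of derivatives at $k_0=-1$. Your global coefficient matching avoids determinants, but your list of functions is incomplete. Both $S_\sigma(t)=\{(1-k)t^{-k}-t^{1-k}+t-1\}/(k\sigma)$ and $S_k$ contain a term in $t$, and at $k=0$, $S_k$ also contains $t(\log t)^2$. With $t$ included, the six functions are linearly dependent at $k=-1$, where $t^{-k}=t$. So the claim of linear independence for $k_0\notin\{0,1\}$ is not literally right.

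Your argument nevertheless goes through, triangularly and uniformly in $k\notin\{0,1\}$:
\begin{itemize}
\item The coefficient of $\log t$ in $aS_\mu+bS_\sigma+cS_k$ is $-c/k$, so $c=0$.
\item The constant coefficient is then $-b/(k\sigma)$, since $S_\mu$ has no constant term, so $b=0$.
\item Finally $aS_\mu\equiv0$ gives $a=0$.
\end{itemize}
For $k\notin\{0,1\}$ neither $1$ nor $\log t$ ever coincides with another function, so $k_0=-1$ needs no special treatment. At $k=0$ the same elimination works using $(\log t)^2$, then $t\log t$, then $t$. The price of your route is citing the linear independence of $t^{\lambda}(\log t)^m$ for distinct pairs $(\lambda,m)$, which the paper's pointwise Taylor argument does not need.
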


\begin{theorem}[Information identity]
\label{thm:info-identity}
For fixed $r\ge1$ and $k_0<1/2$,
\[
\Ep_{\thetao}\|\ddot{\boldsymbol\ell}_{r,\thetao}(\Xv)\|<\infty,
\qquad
-\Ep_{\thetao}\ddot{\boldsymbol\ell}_{r,\thetao}(\Xv)
=I_r(\thetao).
\]
\end{theorem}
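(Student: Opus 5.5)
The plan is to avoid any integral over the moving set $S_{r,\thet}$. I will work at $\thetao$ in the Poisson coordinates $T_j=T_{\thetao}(X^{(j)})$, whose joint law (the arrival times \eqref{eq:arrival}, with density $e^{-t_r}$ on $\{0<t_1<\cdots<t_r\}$) is free of $\thetao$. From \eqref{eq:D}, $D_{\thetao}(x)$ depends on $x$ only through $T_{\thetao}(x)$. Its $\thet$-derivative $\nabla_{\thet}D_{\thet}(x)$, evaluated at $\thetao$, is likewise a function of $T_{\thetao}(x)$ alone. Differentiating \eqref{eq:joint-score} and using $\nabla_{\thet}T_{\thet}=T_{\thet}D_{\thet}$ gives
\[
\ddot{\boldsymbol\ell}_{r,\thet}=\frac r{\sigma^2}\bm e_\sigma\bm e_\sigma^T
-T_rD_rD_r^T-T_r\nabla D_r+(1-k)\sum_{j=1}^r\nabla D_j
-\sum_{j=1}^r(D_j\bm e_k^T+\bm e_kD_j^T).
\]
At $\thetao$, every entry of this matrix, and of $\dot{\boldsymbol\ell}\dot{\boldsymbol\ell}^T$, is therefore an explicit function of $(T_1,\ldots,T_r)$. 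The entries are finite sums of terms $T_j^{a}T_i^{b}(\log T_j)^{m}(\log T_i)^{m'}$ with $a,b\in\{0,-k,-2k,1-k,1-2k,1\}$ and $m,m'\le2$.

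\textbf{Step 1 (integrability).} The most singular term near $T_j=0$ is of order $T_j^{-2k}(\log T_j)^2$; for example, the $(\mu,\mu)$ entry of $\nabla D$ is $-kT^{-2k}/\sigma^2$. Since $T_j\sim\operatorname{Gamma}(j,1)$ has density of order $t^{j-1}$ at $0$, each such term is integrable if and only if $-2k>-j$, and $j=1$ is the binding case, giving $k_0<1/2$. Large-$T$ terms have all moments. For $k_0\le 0$ the $T^{-2k}$ factors are harmless near $0$ and all moments are finite. The case $k_0=0$ follows by replacing the powers with their logarithmic limits, either by dominated convergence or by direct computation. This yields $\Ep\|\ddot{\boldsymbol\ell}_{r,\thetao}\|<\infty$, and the same bound reproves finiteness of $I_r$ from Proposition~\ref{prop:info-pd}.

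\textbf{Step 2 (identity via Gamma integration by parts).} I would show $\Ep(\ddot{\boldsymbol\ell}+\dot{\boldsymbol\ell}\dot{\boldsymbol\ell}^T)=0$ in the fixed coordinates. Rather than comparing Gamma moments entry by entry, I would use the sequential structure of Proposition~\ref{prop:info-pd}. The joint score splits into a block-maximum part and conditional scores $\bm c_s$, where $\bm c_s$ is the score of $X^{(s)}$ given $X^{(s-1)}$. In $T$-coordinates, $\bm c_s$ depends only on $(T_{s-1},T_s)$ through the $\operatorname{Exp}(1)$ increment $E_s=T_s-T_{s-1}$. The Hessian splits the same way, so it suffices to verify, for each factor, $\Ep[\nabla\bm c_s+\bm c_s\bm c_s^T\mid T_{s-1}]=0$ and the analogous identity for the block-maximum factor. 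The cross terms vanish by the martingale property $\Ep(\bm c_s\mid \mathcal F_{s-1})=0$.

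Each one-factor identity reduces to a one-dimensional statement for a density $e^{-e}$, or $t^{0}e^{-t}$ for the maximum, in the variable $e$ or $t$. Writing the $\thet$-derivative of the log-density as a total derivative in $t$ against $\partial_\thet$ of the change of variables $x\mapsto T_{\thet}(x)$, the identity becomes Stein's identity $\Ep[g'(E)]=\Ep[g(E)]-g(0)$. The boundary term is $g(0)$ at $E=0$, or $t\downarrow0$ for the maximum. This term vanishes exactly when $t\,g(t)\to0$, and for $g$ of order $t^{-2k}\log t$ this holds when $k<1/2$. The point is that the integration by parts is in the fixed variable $t$, not over the moving support, so no derivative is ever passed under a moving integral.

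\textbf{Main obstacle.} I expect the main difficulty in Step 2 to be the block-maximum factor with $k_0\in(0,1/2)$. There $T_1\to0$ is exactly the moving upper endpoint, and the relevant boundary terms are borderline of order $t^{1-2k}$. I would first check the $(\mu,\mu)$, $(\mu,k)$ and $(k,k)$ entries explicitly against the closed forms of \citet{prescott1980maximum}, written in terms of $\Gamma(1-k),\Gamma(2-k),\Gamma(1-2k)$ and digamma functions, so that $r=1$ serves as a check. For general $r$ I would then rely on the conditional factors, which involve only $E_s\sim\operatorname{Exp}(1)$ and powers of $T_s\ge T_{s-1}$. Those factors have no boundary singularity once $T_{s-1}>0$ is held fixed. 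The remaining integrability is then supplied by Step 1.
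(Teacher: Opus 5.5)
Your proposal is correct, but it reaches the identity by a different route from the paper.

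\textbf{The paper's route.} It works in the parameter. For each fixed $\xv$ it applies the fundamental theorem of calculus along coordinate segments in $\thet$ across the moving endpoints (Lemma~\ref{lem:ac-endpoint}); there $k_0<1/2$ enters through integrability of $u^{1/k_+-3}$ near a crossing. It combines this with $L^1$-continuity of $\dot p_{\thet,i}$ and $\ddot p_{\thet,ij}$ (Lemma~\ref{lem:density-derivatives}) and Fubini. This gives $\int\dot p_{\thet,i}\,d\xv=\int\ddot p_{\thet,ij}\,d\xv=0$ for all $\thet$ near $\thetao$.

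\textbf{Your route.} You stay at $\thetao$ and integrate by parts in the arrival variable; this is the second-order version of the paper's own argument for \eqref{eq:conditional-score-mean}. Put $G(u)=\nabla_{\thet}T_{\thet}(y)$ and $H(u)=\nabla^2_{\thet}T_{\thet}(y)$ at $\thetao$, with $u=T_{\thetao}(y)$. Then, at $\thetao$, $\nabla_{\thet}\log\lambda_{\thet}(y)=G'(u)$ and $\nabla^2_{\thet}\log\lambda_{\thet}(y)=H'(u)-G'(u)G'(u)^T$. For the $s$th factor take $U=T_s$, $t=T_{s-1}$ and $A=G-G(t)$. The identity reduces to $\Ep\{H'(U)-H(U)\}+H(t)=0$ and $\Ep\{(AA^T)(U)-(AA^T)'(U)\}=0$, conditionally on $T_{s-1}=t$. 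Both follow from $\Ep\{g'(U)\mid T_{s-1}=t\}=\Ep\{g(U)\mid T_{s-1}=t\}-g(t)$. For the block maximum, the only terms left are the boundary terms $-H(0^+)+G(0^+)G(0^+)^T$. They vanish because $\|H(t)\|\le Ct\max(1,t^{-2k_0})(1+|\log t|)^4$ and $\|G(t)\|\le Ct\max(1,t^{-k_0})(1+|\log t|)^2$.

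\textbf{What each route buys.} Yours is shorter, needs neither auxiliary lemma, and locates the threshold in a single boundary term of the leading factor. The paper's route does not use the exponential-spacing and Markov structure, and it gives the Bartlett identities on a whole neighbourhood of $\thetao$. It is also the one that transfers to other likelihoods whose supports are cut out by finitely many endpoint inequalities.

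\textbf{Points to fix when you write it out.}
\begin{itemize}
\item For the conditional factors, the boundary term at $E_s=0$ does not vanish. It cancels against $\nabla^2_{\thet}T_{\thet}(x)$, which comes from the normalising $F_{\thet}(x)$.
\item $\bm c_s$ depends on $T_{s-1}$ as well as on $E_s$, not on $E_s$ alone.
\item The comparison with \citet{prescott1980maximum} is only a check. The block-maximum factor enters for every $r$, and it is proved by your boundary-term argument at $t\downarrow0$, not by the conditional factors.
\end{itemize}
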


Write $P^n_{\thet}$ for the law of $(\Xv_1,\ldots,\Xv_n)$ under $\thet$ and
$\bm\Delta_n=n^{-1/2}\sum_{i=1}^n\dot{\boldsymbol\ell}_{r,\thetao}(\Xv_i)$.

\begin{corollary}[Local asymptotic normality and lower bound]
\label{cor:lan}
Let $r\ge1$ be fixed and $k_0<1/2$.
\begin{enumerate}[label=(\roman*)]
\item For every sequence $\bm h_n\to\bm h$ in $\R^3$,
\[
\sum_{i=1}^n\log\frac{p_{r,\thetao+\bm h_n/\sqrt n}(\Xv_i)}{p_{r,\thetao}(\Xv_i)}
=\bm h^T\bm\Delta_n-\frac12\bm h^TI_r(\thetao)\bm h+o_p(1)
\]
under $P^n_{\thetao}$, and $\bm\Delta_n\rightsquigarrow N_3\{0,I_r(\thetao)\}$.
In particular, $P^n_{\thetao+\bm h_n/\sqrt n}$ and $P^n_{\thetao}$ are mutually
contiguous.
\item Let $\psi:\Theta_{1/2}\to\R^m$ be differentiable at $\thetao$, and let
$L:\R^m\to[0,\infty)$ be bowl-shaped. Then for every estimator sequence $\widehat\psi_n$,
\[
\sup_{\mathcal J}\liminf_{n\to\infty}\sup_{\bm h\in\mathcal J}
\Ep_{\thetao+\bm h/\sqrt n}
L\bigl[\sqrt n\{\widehat\psi_n-\psi(\thetao+\bm h/\sqrt n)\}\bigr]
\ge\Ep L(\bm Z),
\]
where $\mathcal J$ ranges over finite subsets of $\R^3$ and
$\bm Z\sim N_m\{0,\dot\psi_{\thetao}I_r(\thetao)^{-1}\dot\psi_{\thetao}^T\}$.
\end{enumerate}
\end{corollary}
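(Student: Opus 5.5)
Part (i) follows from Theorem~\ref{thm:dqm}, and part (ii) from the standard local asymptotic minimax theorem. Both rest on the general theory for i.i.d.\ DQM families in \citet[Chapters~7--8]{vanderVaart1998}. The only work is to check the hypotheses and identify the limiting objects with those of the $r$GEV model.

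For (i), fix $k_0<1/2$. By Theorem~\ref{thm:dqm} the family $\{P_{r,\thet}\}$ is DQM at $\thetao$, and its quadratic-mean score is $\dot{\boldsymbol\ell}_{r,\thetao}$ on $S_{r,\thetao}$ and zero outside. Since $P_{r,\thetao}(S_{r,\thetao})=1$, this score equals $\dot{\boldsymbol\ell}_{r,\thetao}(\Xv_i)$ almost surely, so the $\bm\Delta_n$ of the statement is the correct central sequence. DQM implies $\Ep_{\thetao}\dot{\boldsymbol\ell}_{r,\thetao}=0$ and $\Ep_{\thetao}\|\dot{\boldsymbol\ell}_{r,\thetao}\|^2<\infty$; by Proposition~\ref{prop:info-pd} the covariance is $I_r(\thetao)$.

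We then apply \citet[Theorem~7.2]{vanderVaart1998}, which states that for a DQM family and $\bm h_n\to\bm h$,
\[
\sum_{i=1}^n\log\frac{p_{\thetao+\bm h_n/\sqrt n}}{p_{\thetao}}(\Xv_i)
=\bm h^T\bm\Delta_n-\tfrac12\bm h^TI\bm h+o_p(1).
\]
The log-ratio may equal $-\infty$ when some $\Xv_i$ falls outside $S_{r,\thetao+\bm h_n/\sqrt n}$. That theorem already allows this, because DQM controls the mass of $P_{\thetao}$ outside the perturbed support. Concretely, $n\,P_{\thetao}(p_{\thetao+\bm h_n/\sqrt n}=0)\to0$ follows from $\int_{\{s_{\thetao+\bm h}=0\}}s_{\thetao}^2=o(\|\bm h\|^2)$, and this holds because the DQM remainder dominates $\int_{\{s_{\thetao+\bm h}=0\}}s_{\thetao}^2(1+\tfrac12\bm h^T\dot{\boldsymbol\ell})^2$. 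The central limit theorem gives $\bm\Delta_n\rightsquigarrow N_3\{0,I_r(\thetao)\}$. The log-likelihood ratio is therefore asymptotically $N(-\tfrac12\bm h^TI_r\bm h,\ \bm h^TI_r\bm h)$, and Le Cam's first lemma gives mutual contiguity.

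For (ii), (i) shows that the experiments $(P^n_{\thetao+\bm h/\sqrt n}:\bm h\in\R^3)$ converge to the Gaussian shift $N_3(\bm h,I_r(\thetao)^{-1})$. This uses that $I_r(\thetao)$ is nonsingular, which is the positive definiteness in Proposition~\ref{prop:info-pd}. Since $\psi$ is differentiable at $\thetao$, we have $\sqrt n\{\psi(\thetao+\bm h/\sqrt n)-\psi(\thetao)\}\to\dot\psi_{\thetao}\bm h$. The local asymptotic minimax theorem \citep[Theorem~8.11]{vanderVaart1998} then gives the bound with $\bm Z\sim N_m(0,\dot\psi_{\thetao}I_r^{-1}\dot\psi_{\thetao}^T)$. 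That theorem requires $\Theta$ to contain a neighbourhood of $\thetao$, and this holds because $\Theta_{1/2}$ is open. Its loss condition is bowl-shaped $L$, matching the statement; no further integrability is needed because the bound is stated with $\liminf$ and a supremum over finite sets.

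The main obstacle is the moving support in (i), that is, justifying the expansion when the supports of $P_{\thetao+\bm h_n/\sqrt n}$ and $P_{\thetao}$ differ. I would deal with it entirely through the cited theorem, whose proof works with $W_{ni}=2(s_{\thetao+\bm h_n/\sqrt n}/s_{\thetao}-1)$ and needs only the DQM property. I would also state explicitly that no fixed-support assumption enters. Everything model-specific (DQM and nonsingular information) is supplied by Theorem~\ref{thm:dqm} and Proposition~\ref{prop:info-pd}.
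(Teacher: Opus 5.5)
Your proposal is correct and follows the paper's proof. Part (i) feeds DQM from Theorem~\ref{thm:dqm} into Theorem~7.2 of \citet{vanderVaart1998} and then uses the central limit theorem and Le Cam's first lemma, and part (ii) applies Theorem~8.11 with the nonsingularity from Proposition~\ref{prop:info-pd}. Your side remark on the perturbed support needs one more line to pass from $\int_{\{s_{\thetao+\bm h}=0\}}s_{\thetao}^2(1+\tfrac12\bm h^T\dot{\boldsymbol\ell})^2$ to $\int_{\{s_{\thetao+\bm h}=0\}}s_{\thetao}^2$: the set $\{|\bm h^T\dot{\boldsymbol\ell}|\ge1\}$ has probability $o(\|\bm h\|^2)$ by square integrability. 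The cited theorem already handles this, so nothing depends on it.
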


\begin{remark}[Failure of contiguity for $k_0\ge1/2$]
\label{rem:lan-failure}
Let $k_0\ge1/2$, $a>0$ and $\thet_n=(\mu_0+a/\sqrt n,\sigma_0,k_0)$. Then
$P^n_{\thet_n}$ is not contiguous with respect to $P^n_{\thetao}$, so no expansion
of the form in Corollary~\ref{cor:lan}(i) can hold; the usual root-$n$ LAN
expansion fails along this location direction, and a different normalization
is needed in this range. The proof is in Section~\ref{sec:proof-lan}.
\end{remark}

Corollary~\ref{cor:lan}(ii) applies to the parameters and to return levels
$z_p(\thet)=\mu+(\sigma/k)(1-y_p^{\,k})$, $y_p=-\log(1-p)$. Since
$I_r(\thetao)\succeq I_1(\thetao)$, the lower bound based on the $r$ largest order
statistics is never larger than that based on block maxima alone, whereas the
regularity threshold $k_0<1/2$ is the same for every $r$.

\section{How much information do the lower order statistics add?}
\label{sec:numerics}

Proposition~\ref{prop:info-pd} shows that each additional order statistic adds
a positive semidefinite term to the Fisher information, and
Corollary~\ref{cor:lan}(ii) turns $I_r(\thetao)^{-1}$ into a lower bound for the
asymptotic risk of every estimator sequence. This section quantifies the gain.
Since the conditional scores $\bm c_s$ in Proposition~\ref{prop:info-pd} do not
depend on $r$,
\[
I_{r+1}(\thetao)-I_r(\thetao)=\Ep_{\thetao}(\bm c_{r+1}\bm c_{r+1}^T)\succeq0,
\]
so $I_r(\thetao)^{-1}$ decreases in $r$ in the positive semidefinite order. The
matrix $I_r(\thetao)$ does not depend on $\mu_0$, and with
$S=\operatorname{diag}(\sigma_0,\sigma_0,1)$ the matrix $SI_r(\thetao)S$ depends only
on $k_0$.

We consider two functionals: the shape parameter, $\psi(\thet)=k$, and the
return level, the $(1-p)$-quantile of the block maximum; with annual blocks,
$z_{0.01}$ is the 100-year return level:
\[
z_p(\thet)=
\begin{cases}
\mu+\sigma(1-y_p^{\,k})/k,&k\ne0,\\
\mu-\sigma\log y_p,&k=0,
\end{cases}
\qquad y_p=-\log(1-p),
\]
whose gradient is
\[
\begin{aligned}
\nabla z_p(\thet)&=
\Bigl(1,\ \frac{1-y_p^{\,k}}{k},\
-\frac{\sigma(1-y_p^{\,k})}{k^2}-\frac{\sigma y_p^{\,k}\log y_p}{k}\Bigr)^T
&&(k\ne0),\\
\nabla z_p(\thet)&=\bigl(1,\,-\log y_p,\,-\tfrac12\sigma(\log y_p)^2\bigr)^T
&&(k=0).
\end{aligned}
\]
For a differentiable $\psi$, the normal law in Corollary~\ref{cor:lan}(ii) has
standard deviation
\[
\{\dot\psi_{\thetao}I_r(\thetao)^{-1}\dot\psi_{\thetao}^T\}^{1/2}.
\]
We refer to this
quantity as the efficient asymptotic standard deviation of $\psi$. Its square is
the local asymptotic minimax variance bound, and by the convolution theorem
\citep[Theorem~8.8]{vanderVaart1998} it is also the smallest possible standard
deviation of the normal limit of $\sqrt n\{\widehat\psi_n-\psi(\thetao)\}$ for a
regular estimator sequence $\widehat\psi_n$. The corresponding efficient
asymptotic standard error of $\widehat\psi_n$ is this quantity divided by
$\sqrt n$, so relative reductions in $r$ are the same for both. It depends only
on $k_0$ for
$\psi=k$, and it is $\sigma_0$ times a function of $k_0$ and $p$ for $\psi=z_p$.

We computed $I_r(\thetao)$ from the sequential decomposition of
Proposition~\ref{prop:info-pd}: $I_1(\thetao)$ by one-dimensional numerical
integration against the $\operatorname{Exp}(1)$ law of $T_1$, and each
$\Ep_{\thetao}(\bm c_s\bm c_s^T)$ by two-dimensional numerical integration against
the joint law of $(T_{s-1},T_s)$ in \eqref{eq:arrival}, where, by
\eqref{eq:loglik-T} and the factorization \eqref{eq:sequential-factorization},
\[
\bm c_s=-\sigma_0^{-1}\bm e_\sigma+(1-k_0-T_s)D_{\thetao}(X^{(s)})
-\bm e_k\log T_s+T_{s-1}D_{\thetao}(X^{(s-1)}).
\]
The values of $I_1(\thetao)$ agree with the closed form of
\citet{prescott1980maximum}, and the conditional contributions agree with
Monte Carlo averages to within the Monte Carlo error.

\begin{table}[!htb]
\centering
\caption{Efficient asymptotic standard deviations
$\{\dot\psi_{\thetao}I_r(\thetao)^{-1}\dot\psi_{\thetao}^T\}^{1/2}$ for the shape
parameter $\psi=k$ and for the 100-year return level $\psi=z_{0.01}$, with
$\sigma_0=1$.}
\label{tab:asd}
\begin{tabular}{@{}r cccc c cccc@{}}
\toprule
& \multicolumn{4}{c}{shape $k$} && \multicolumn{4}{c}{return level $z_{0.01}$}\\
\cmidrule{2-5}\cmidrule{7-10}
$k_0$ & $r=1$ & $r=2$ & $r=3$ & $r=5$ && $r=1$ & $r=2$ & $r=3$ & $r=5$\\
\midrule
$-$0.5 & 0.968 & 0.775 & 0.675 & 0.559 && 60.43 & 54.18 & 50.53 & 45.75 \\
$-$0.4 & 0.913 & 0.731 & 0.635 & 0.526 && 39.57 & 35.73 & 33.45 & 30.44 \\
$-$0.3 & 0.858 & 0.685 & 0.595 & 0.492 && 25.94 & 23.62 & 22.22 & 20.34 \\
$-$0.2 & 0.803 & 0.640 & 0.555 & 0.458 && 16.99 & 15.64 & 14.80 & 13.65 \\
$-$0.1 & 0.747 & 0.594 & 0.514 & 0.423 && 11.09 & 10.34 & 9.86 & 9.18 \\
0.0 & 0.690 & 0.547 & 0.473 & 0.388 && 7.20 & 6.80 & 6.54 & 6.17 \\
0.1 & 0.633 & 0.500 & 0.431 & 0.353 && 4.61 & 4.42 & 4.30 & 4.11 \\
0.2 & 0.575 & 0.451 & 0.388 & 0.317 && 2.88 & 2.81 & 2.77 & 2.69 \\
0.3 & 0.515 & 0.402 & 0.345 & 0.281 && 1.72 & 1.71 & 1.70 & 1.68 \\
0.4 & 0.454 & 0.351 & 0.300 & 0.244 && 0.94 & 0.93 & 0.93 & 0.93 \\
\bottomrule
\end{tabular}
\end{table}

Table~\ref{tab:asd} reports the results for $k_0\in[-0.5,0.4]$. The efficient
asymptotic standard deviation for the shape parameter decreases substantially
with $r$: by $20$--$23\%$ from $r=1$ to
$r=2$ and by $42$--$46\%$ from $r=1$ to $r=5$, almost uniformly in $k_0$; the
reduction for $\sigma$ (not shown) is between $16\%$ and $66\%$ at $r=5$. The
reduction for the 100-year return level depends strongly on $k_0$: from $r=1$
to $r=5$ it is $20$--$24\%$ in the heavy-tailed cases $k_0\le-0.2$ (for example,
from $60.43$ to $45.75$ for $k_0=-0.5$, and from $16.99$ to $13.65$ for
$k_0=-0.2$), increasing with the heaviness of the tail, about
$14\%$ in the Gumbel case, and only $2\%$ for $k_0=0.4$. When $k_0>0$ the return level lies close to
the upper endpoint $\omega(\thetao)$; a plausible explanation is that information
about this region of the distribution comes mainly from the block maxima, so that
the lower order statistics sharpen the estimation of $k$ and $\sigma$ but add
little for $z_p$. These figures concern the exact $r$GEV model; when the model is
only an approximation for the $r$ largest observations of a block, larger $r$ may
also increase the bias \citep{smith1986extreme,BucherHaufs2026}, which is one of the
considerations behind data-based choices of $r$
\citep{bader2017automated,ShinParkPark2026}.

\section{Proofs}
\label{sec:proofs}

\subsection{A zero-extension lemma}
\label{sec:zero-ext}

The parameter-dependent support is handled repeatedly through the following
elementary fact. For fixed real numbers $x_1,\ldots,x_m$ put
\[
\pi_j(\thet)=\sigma-k(x_j-\mu),\qquad j=1,\ldots,m,
\]
so that $w_{\thet}(x_j)>0$ if and only if $\pi_j(\thet)>0$.

\begin{lemma}[Zero extension across a moving endpoint]
\label{lem:zero-extension}
Let $V\subset\{\thet:\sigma>0\}$ be open and convex and let
$O=\{\thet\in V:\pi_j(\thet)>0,\ j=1,\ldots,m\}$. Let $g:V\to\R$ and
$c\in\R$ satisfy:
\begin{enumerate}[label=(\roman*)]
\item $g$ is continuously differentiable on $O$;
\item $g\equiv c$ on $V\setminus O$;
\item for every $\thet^*\in V\cap\partial O$, $g(\thet)\to c$ and
$\nabla g(\thet)\to0$ as $\thet\to\thet^*$ with $\thet\in O$.
\end{enumerate}
Then $g$ is continuously differentiable on $V$, with $\nabla g=0$ on
$V\setminus O$, and for all $\thet_1,\thet_2\in V$,
\begin{equation}
|g(\thet_1)-g(\thet_2)|
\le\sup_{\vartheta\in[\thet_1,\thet_2]\cap O}\|\nabla g(\vartheta)\|\,
\|\thet_1-\thet_2\|,
\label{eq:zero-extension-mvt}
\end{equation}
where the supremum over the empty set is $0$.
\end{lemma}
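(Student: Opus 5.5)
The plan is to first establish continuity and differentiability of $g$ on all of $V$, and then derive the mean-value inequality \eqref{eq:zero-extension-mvt} by splitting the segment $[\thet_1,\thet_2]$ at its crossings of $\partial O$.

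\textbf{Structure of $O$.} Each $\pi_j$ is affine in $\thet$ for fixed $x_j$, since $\pi_j(\thet)=\sigma-kx_j+k\mu$ is bilinear in $(k,\mu)$. Strictly speaking it is not affine because of the product $k\mu$, so $O$ need not be convex. This is why the supremum in \eqref{eq:zero-extension-mvt} is taken over $[\thet_1,\thet_2]\cap O$, not over a convex region. Restricted to a segment $\vartheta(t)=\thet_2+t(\thet_1-\thet_2)$, each $\pi_j(\vartheta(t))$ is a polynomial of degree at most $2$ in $t$. Hence $\{t\in[0,1]:\vartheta(t)\in O\}$ is a finite union of open intervals, relative to $[0,1]$, unless some $\pi_j\circ\vartheta$ vanishes identically, in which case that set is empty.

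\textbf{Step 1: $g$ is $C^1$ on $V$ with $\nabla g=0$ off $O$.}
\begin{itemize}
\item On the open set $O$ this is hypothesis (i).
\item On the interior of $V\setminus O$, $g\equiv c$, so $\nabla g=0$ there.
\item At a point $\thet^*\in V\cap\partial O$, we have $g(\thet^*)=c$ because $O$ is open and so $\thet^*\notin O$. By (iii), $g(\thet)\to c$ along $O$, and $g=c$ off $O$, so $g$ is continuous at $\thet^*$.
\end{itemize}
For differentiability at $\thet^*$ with zero gradient, we need $|g(\thet)-c|=o(\|\thet-\thet^*\|)$. Fix $\varepsilon>0$ and use (iii) to choose a ball $B\subset V$ around $\thet^*$ on which $\|\nabla g\|\le\varepsilon$ on $B\cap O$. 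For $\thet\in B\cap O$, move along the segment from $\thet$ toward $\thet^*$ and let $t_0$ be the first parameter at which the segment leaves $O$; this exists since $\thet^*\notin O$. On $[\thet,\vartheta(t_0))\subset O$ apply the ordinary mean value theorem. Letting $t\uparrow t_0$ and using $g(\vartheta(t_0))=c$ together with continuity gives $|g(\thet)-c|\le\varepsilon\|\thet-\thet^*\|$. So $\nabla g(\thet^*)=0$, and continuity of $\nabla g$ at $\thet^*$ is exactly (iii).

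\textbf{Step 2: the inequality \eqref{eq:zero-extension-mvt}.} Let $h(t)=g(\vartheta(t))$. By Step 1, $h$ is $C^1$ on $[0,1]$ with
\[
h'(t)=\nabla g(\vartheta(t))^T(\thet_1-\thet_2),
\]
and $h'(t)=0$ whenever $\vartheta(t)\notin O$. Therefore
\[
|h(1)-h(0)|\le\int_0^1|h'(t)|\,dt\le\sup_{[\thet_1,\thet_2]\cap O}\|\nabla g\|\,\|\thet_1-\thet_2\|,
\]
and the supremum is $0$ if the segment misses $O$. This requires $[\thet_1,\thet_2]\subset V$, which holds by convexity of $V$. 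Once Step 1 is in place, this step also avoids the need for the finite-interval decomposition.

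\textbf{Main obstacle.} The delicate point is differentiability at boundary points $\thet^*$. There, $g$ is known to be differentiable only from inside $O$, and $O$ may approach $\thet^*$ in a cusp-like way, so we cannot assume a nice one-sided neighbourhood. The segment argument in Step 1 handles this, because it only uses the ball $B$ (which is convex) and the limits in (iii), not any geometry of $O$.
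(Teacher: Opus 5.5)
Your proof is correct, but it runs in the opposite order from the paper's.

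The paper restricts $g$ to the segment $[\thet_1,\thet_2]$ and uses that each $\pi_j$ is a polynomial of degree at most two along it, so the segment crosses $\partial O$ only finitely often. It proves \eqref{eq:zero-extension-mvt} by applying the fundamental theorem of calculus on the finitely many pieces. Only then does it deduce $\nabla g(\thet^*)=0$ at boundary points, by taking $(\thet_1,\thet_2)=(\thet^*,\thet^*+\bm h)$ in that inequality.

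You instead obtain $|g(\thet)-c|\le\varepsilon\|\thet-\thet^*\|$ directly. You run the mean value theorem only up to the first exit point of the segment from $\thet$ to $\thet^*$, and that exit point lies in $V\cap\partial O$, so (iii) applies there. This needs no count of crossings. Once $g\in C^1(V)$ with $\nabla g=0$ off $O$, the inequality is just the ordinary mean-value estimate.

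What your route buys is generality: it uses only that $O$ is open. The degree-two structure of $\pi_j$ along segments is never needed, so your paragraph on the structure of $O$ can be dropped. Its first sentence, which calls $\pi_j$ affine, also contradicts the next one.

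What the paper's route buys is reuse. The same finite-crossing decomposition drives Lemma~\ref{lem:ac-endpoint}. There the derivatives at a crossing are only integrable rather than tending to zero, so one genuinely has to assemble the fundamental theorem of calculus piece by piece rather than obtain a $C^1$ statement.
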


\begin{proof}
The idea is to restrict $g$ to the segment joining $\thet_1$ and $\thet_2$
and to show that this segment crosses the moving boundary only finitely
often. Let $\thet(u)=\thet_1+u(\thet_2-\thet_1)$ and $\varphi(u)=g\{\thet(u)\}$,
$u\in[0,1]$. Since $k(u)$ and $\mu(u)$ are affine in $u$, each
$u\mapsto\pi_j\{\thet(u)\}=\sigma(u)-k(u)\{x_j-\mu(u)\}$ is a polynomial of
degree at most two. Unless it vanishes identically, it has at most two zeros,
so the set where it is positive is a union of at most three relatively open
intervals. Hence $I=\{u:\thet(u)\in O\}$ is a finite union of relatively open
intervals. If some $\pi_j\circ\thet$ vanishes identically, then $I=\emptyset$
and $\varphi\equiv c$, so there is nothing to prove.

The function $\varphi$ is continuous on $[0,1]$: it is continuous on $I$ by
(i), constant on $[0,1]\setminus I$ by (ii), and at an endpoint $u_0\notin I$
of a component of $I$ the point $\thet(u_0)$ belongs to $V\cap\partial O$, so
(iii) gives $\varphi(u)\to c=\varphi(u_0)$ as $u\to u_0$ within $I$. Moreover,
$\varphi$ is continuously differentiable except at the finitely many endpoints
of the components of $I$, with
$|\varphi'(u)|\le\|\nabla g\{\thet(u)\}\|\,\|\thet_2-\thet_1\|$ on $I$ and
$\varphi'=0$ outside the closure of $I$. This derivative is bounded, by (i),
(iii) and compactness of the segment. Applying the fundamental theorem of
calculus on each of the finitely many subintervals and adding the results
gives $|\varphi(1)-\varphi(0)|\le\int_0^1|\varphi'(u)|\,du$, which is
\eqref{eq:zero-extension-mvt}.

Let $\thet^*\in V\setminus O$. If $\thet^*$ is interior to $V\setminus O$,
$g$ is locally constant. Otherwise $\thet^*\in V\cap\partial O$, and
\eqref{eq:zero-extension-mvt} with $(\thet_1,\thet_2)=(\thet^*,\thet^*+\bm h)$
gives $|g(\thet^*+\bm h)-c|=o(\|\bm h\|)$ by (iii). Hence
$\nabla g(\thet^*)=0$, and (iii) also gives continuity of $\nabla g$ at
$\thet^*$.
\end{proof}

Lemma~\ref{lem:zero-extension} is used in the proof of DQM for $k_0<1/3$ (Lemma~\ref{lem:sqrt-smooth}); the same finite-crossing idea, applied along
coordinate segments, underlies Lemma~\ref{lem:ac-endpoint} below. It formalizes
a familiar heuristic: a function
that vanishes outside a moving region remains smooth across the moving
boundary, provided that both the function and its gradient vanish as the
boundary is approached from inside. It is a multivariate version of the
boundary argument in \citet[Lemma~B.5 and Appendix~C]{buchersegers2017maximum}.
Condition (iii) is a pointwise requirement for each fixed
$(x_1,\ldots,x_m)$; no uniformity in the observation is needed. Integrability
is always controlled separately, through explicit envelopes.

\subsection{Proof of Theorem~\ref{thm:dqm}}
\label{sec:proof-dqm}

\emph{Necessity.} Let $k_0\ge1/2$ and $\thet_t=(\mu_0+t,\sigma_0,k_0)$,
$t\downarrow0$. The upper endpoint moves from $\omega_0=\mu_0+\sigma_0/k_0$ to
$\omega_0+t$. Since the first marginal is the GEV distribution,
\begin{equation}
P_{\thet_t}\{p_{r,\thetao}=0\}
\ge P_{\thet_t}\{X^{(1)}\ge\omega_0\}
=1-\exp\Bigl[-\Bigl(\frac{k_0t}{\sigma_0}\Bigr)^{1/k_0}\Bigr],
\label{eq:dqm-necessity-mass}
\end{equation}
which is not $o(t^2)$ when $1/k_0\le2$. On $\{p_{r,\thetao}=0\}$ both
$s_{\thetao}$ and $\dot{\boldsymbol\ell}_{\thetao}s_{\thetao}$ vanish, so the DQM
integral is at least $\int_{\{p_{r,\thetao}=0\}}s_{\thet_t}^2\,d\xv
=P_{\thet_t}\{p_{r,\thetao}=0\}$, which would have to be $o(t^2)$. Hence DQM fails.

\emph{Sufficiency} is proved in Appendix~\ref{sec:S-dqm}. For $k_0<1/3$, the
square-root density is continuously differentiable in $\thet$ for every $\xv$ (Lemma~\ref{lem:sqrt-smooth}, via Lemma~\ref{lem:zero-extension}), and DQM follows
from Lemma~7.6 of \citet{vanderVaart1998}. For $1/3\le k_0<1/2$, a strip of width
$O(t)$ below the moving endpoint carries probability $o(t^2)$; this is where
$k_0<1/2$ enters, in agreement with \eqref{eq:dqm-necessity-mass}.

\subsection{The sequential Fisher-information decomposition}
\label{sec:proof-info}

Let $f_{\thet}$ be the GEV density. The joint density factorizes as
\begin{equation}
p_{r,\thet}(\xv)=f_{\thet}(x_1)\prod_{s=2}^rq_{s,\thet}(x_s\mid x_{s-1}),
\qquad
q_{s,\thet}(y\mid x)=\frac{f_{\thet}(y)}{F_{\thet}(x)}\ind(y<x),
\label{eq:sequential-factorization}
\end{equation}
where $q_{s,\thet}(\cdot\mid x)$ is the conditional density of $X^{(s)}$ given
$X^{(s-1)}=x$. With
$\bm c_s=\nabla_{\thet}\log q_{s,\thet}(X^{(s)}\mid X^{(s-1)})|_{\thet=\thetao}$,
\begin{equation}
\dot{\boldsymbol\ell}_{r,\thetao}
=\dot{\boldsymbol\ell}_{1,\thetao}(X^{(1)})+\sum_{s=2}^r\bm c_s .
\label{eq:score-decomposition}
\end{equation}

The key step is the conditional score identity. For $k<0$ the lower endpoint of
the support of $q_{s,\thet}(\cdot\mid x)$ moves with $\thet$, so differentiating
$\int q_{s,\thet}(y\mid x)\,dy=1$ under the integral sign requires separate
justification. We verify the identity directly on the arrival scale, which
treats all values of $k$ at once. Write $\lambda_{\thet}(y)=-\partial_yT_{\thet}(y)$, so that
$\log q_{s,\thet}(y\mid x)=\log\lambda_{\thet}(y)-T_{\thet}(y)+T_{\thet}(x)$ for
$y<x$, and let $G(u)=\nabla_{\thet}T_{\thet}(y)|_{\thet=\thetao}$ at the point $y$
with $T_{\thetao}(y)=u$. The chain rule gives
$G'(u)=\nabla_{\thet}\log\lambda_{\thet}(y)|_{\thet=\thetao}$, hence
$\bm c_s=G'(U)-G(U)+G(t)$ with $U=T_s$ and $t=T_{s-1}$. Given $T_{s-1}=t$, the
variable $U-t$ is $\operatorname{Exp}(1)$ by \eqref{eq:arrival}, and $G$ grows at
most polynomially (equation~\eqref{eq:D-uniform}). An integration by parts
therefore gives
\begin{equation}
\Ep_{\thetao}(\bm c_s\mid X^{(1)},\ldots,X^{(s-1)})
=\int_t^\infty e^{-(u-t)}\{G'(u)-G(u)+G(t)\}\,du
=-G(t)+G(t)=0 .
\label{eq:conditional-score-mean}
\end{equation}
Each $\bm c_s=\dot{\boldsymbol\ell}_{s,\thetao}-\dot{\boldsymbol\ell}_{s-1,\thetao}$ is
square integrable (Lemma~\ref{lem:info-cont}), so $(\bm c_s)_{s\ge2}$ is a
square-integrable martingale-difference sequence with respect to
$\mathcal F_s=\sigma(X^{(1)},\ldots,X^{(s)})$. All cross-covariances in
\eqref{eq:score-decomposition} vanish, and
\begin{equation}
I_r(\thetao)=I_1(\thetao)+\sum_{s=2}^r\Ep_{\thetao}(\bm c_s\bm c_s^T)
\succeq I_1(\thetao).
\label{eq:information-decomposition}
\end{equation}
Finally, $I_1(\thetao)>0$ because the three GEV score components are linearly
independent functions of $T_1\sim\operatorname{Exp}(1)$ (Lemma~\ref{lem:I1-pd}). This proves Proposition~\ref{prop:info-pd}.

\subsection{Proof of Theorem~\ref{thm:info-identity}}
\label{sec:proof-identity}

Since the support moves, we cannot differentiate $\int p_{r,\thet}=1$ twice under
the integral sign. Instead we apply the fundamental theorem of calculus along
parameter segments for each fixed $\xv$, and then integrate over $\xv$. Define,
on $S_{r,\thet}$ and as zero elsewhere,
\[
\dot p_{\thet,i}=p_{r,\thet}\,\partial_i\ell_{r,\thet},
\qquad
\ddot p_{\thet,ij}=p_{r,\thet}
\bigl(\partial_{ij}^2\ell_{r,\thet}+\partial_i\ell_{r,\thet}\partial_j\ell_{r,\thet}\bigr),
\]
and fix a compact box $V\subset\Theta_{1/2}$ around $\thetao$ with
$-\bar k\le k\le k_+<1/2$ and $c\le\sigma\le C$ on $V$. Two lemmas, proved in
Appendix~\ref{sec:S-hessian}, play complementary roles.

\begin{lemma}[$L^1$ continuity]
\label{lem:density-derivatives}
The maps $\thet\mapsto\dot p_{\thet,i}$ and
$\thet\mapsto\ddot p_{\thet,ij}$ are continuous from $V$ into $L^1(\R^r)$.
\end{lemma}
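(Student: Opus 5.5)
The plan is to reduce $L^1$ continuity to pointwise convergence plus a uniform integrable envelope, and then apply a generalized dominated convergence theorem (Scheffé/Vitali type). Fix $\thet_n\to\thet$ in $V$ and a point $\xv$. Both $\dot p_{\thet,i}$ and $\ddot p_{\thet,ij}$ are products of $p_{r,\thet}$ with polynomials in $\log T_{\thet}(x_j)$, $T_{\thet}(x_j)^{-k}$, $T_r$, and the analogous second-derivative quantities coming from \eqref{eq:D}. On the open set $S_{r,\thet}$ all these maps are jointly continuous in $\thet$, so convergence holds for $\xv\in S_{r,\thet}$. For $\xv$ outside the closure of $S_{r,\thet}$, the strict inequalities defining the support fail strictly, so $\xv\notin S_{r,\thet_n}$ for large $n$ and both sides are zero. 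The boundary of $S_{r,\thet}$ has Lebesgue measure zero. Hence pointwise a.e. convergence holds without needing any boundary behaviour.

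For the envelope, change to arrival-scale variables. For $\thet\in V$ and $\xv\in S_{r,\thet}$, write $T_j=T_{\thet}(x_j)$. The density is $p_{r,\thet}=\sigma^{-r}e^{-T_r}\prod_jT_j^{1-k}$, and every factor in $\dot p$ and $\ddot p$ is bounded by a polynomial in $T_j^{\pm k}$, $|\log T_j|$ and $T_r$. This is the uniform growth bound \eqref{eq:D-uniform}, extended to second derivatives by differentiating \eqref{eq:D} once more. One then shows
\[
\sup_{\thet\in V}|\dot p_{\thet,i}(\xv)|+|\ddot p_{\thet,ij}(\xv)|\le H(\xv),\qquad \int H\,d\xv<\infty,
\]
or, if a single envelope is awkward because supports move, one shows uniform integrability of the family. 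The latter is done via the substitution $x_j\mapsto T_j$, whose Jacobian is $\prod_j\sigma T_j^{k-1}$. This turns $\int|\ddot p_{\thet,ij}|\,d\xv$ into an integral against the Gamma arrival law of a polynomial in $T_j^{\pm k},|\log T_j|,T_r$.

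The main obstacle is the behaviour near the moving endpoint. The quantity $T_j^{-k}$ enters squared in $\ddot p$ (through products of scores) together with extra powers from $\partial^2\ell$. Near $T\to0$ when $k>0$ (the upper endpoint, for $X^{(1)}$), the integrand behaves like $T_1^{-2k}\cdot T_1^{0}$ times logs against $e^{-T_1}dT_1$, which is integrable iff $2k<1$. This is exactly where $k_+<1/2$ on $V$ is used. For $k<0$ the critical variable is $T_r\to\infty$, and there the factor $e^{-T_r}$ dominates any polynomial growth.

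To get uniformity in $\thet$ rather than only for each fixed $\thet$, I would prove a slightly stronger moment bound: $\sup_{\thet\in V}\int|\ddot p_{\thet,ij}|^{1+\delta}p_{r,\thet}^{-\delta}\,d\xv<\infty$ for small $\delta>0$. In arrival variables this is $\sup_V\Ep|\text{poly}|^{1+\delta}<\infty$, which still holds for small $\delta$ because $2k_+(1+\delta)<1$. Combined with the pointwise convergence of the densities $p_{r,\thet_n}\to p_{r,\thet}$ (which converge in $L^1$ by Scheffé), Vitali's theorem then yields $\dot p_{\thet_n,i}\to\dot p_{\thet,i}$ and $\ddot p_{\thet_n,ij}\to\ddot p_{\thet,ij}$ in $L^1$.
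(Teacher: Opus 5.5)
Your argument is correct. It shares the paper's two main ingredients: a.e.\ convergence of the zero extensions (interior of $S_{r,\thet}$, exterior of its closure, Lebesgue-null boundary), and control of integrals in arrival coordinates, where the law $e^{-t_r}\,d\bm t$ does not depend on $\thet$ and all bounds are uniform over $V$. You differ from the paper in how these two ingredients are combined.

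\emph{The paper's route.} It takes a $\thet$-uniform envelope for $\sup_{\thet\in V}$ of the transformed score and Hessian, built from \eqref{eq:D-integral} and Lemma~\ref{lem:hess-D} and integrable because $2k_+<1$. Dominated convergence in $\bm t$ then gives $\int(\dot p_{\thet_n,i})^{\pm}d\xv\to\int(\dot p_{\thet,i})^{\pm}d\xv$, and likewise for $\ddot p_{\thet,ij}$. Scheff\'e's lemma, applied separately to positive and negative parts, finishes the proof. No extra moment and no tightness bookkeeping are needed.

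\emph{Your route.} You bound $\sup_{\thet\in V}\Ep_{\thet}|\partial^2_{ij}\ell_{r,\thet}+\partial_i\ell_{r,\thet}\partial_j\ell_{r,\thet}|^{1+\delta}\le K$. This costs the slack $2k_+(1+\delta)<1$, which is available because $V$ is compact in $\{k<1/2\}$. Uniform integrability then follows from H\"older's inequality,
\[
\int_A|\ddot p_{\thet_n,ij}|\,d\xv\le K^{1/(1+\delta)}\,P_{\thet_n}(A)^{\delta/(1+\delta)},
\]
together with the fact that the $L^1$-convergent densities $p_{r,\thet_n}$ are uniformly integrable and tight. You should write this step out, including tightness, because Vitali's theorem on $\R^r$ needs both.

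\emph{Comparison.} Your route needs only moments bounded uniformly in $\thet$ (supremum outside the expectation), not an envelope for the supremum, so it would survive where no pointwise uniform envelope exists. The paper's route is shorter, since that envelope is already at hand.

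Your remark that a single envelope in $\xv$-space is awkward is well founded. For $k_+>1/3$, $|\ddot p_{\thet,ij}(\xv)|$ is of order $T_1^{1-3k}$ near the upper endpoint (for instance for $i=j=\mu$). Its supremum over $\thet\in V$ is therefore infinite on the positive-measure strip swept by the moving endpoint. This is why both arguments control the integrals in arrival coordinates and use $\xv$-coordinates only for pointwise convergence.
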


\begin{lemma}[Absolute continuity across moving endpoints]
\label{lem:ac-endpoint}
For every strictly ordered $\xv\in\R^r$ and every coordinate segment
$\{\thet+s\bm e_i\}\subset V$, the maps $s\mapsto p_{r,\thet+s\bm e_i}(\xv)$ and
$s\mapsto\dot p_{\thet+s\bm e_i,j}(\xv)$ are absolutely continuous, with derivatives
$\dot p_{\thet+s\bm e_i,i}(\xv)$ and $\ddot p_{\thet+s\bm e_i,ij}(\xv)$ outside a
finite set.
\end{lemma}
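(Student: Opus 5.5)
We prove Lemma~\ref{lem:ac-endpoint} for a fixed strictly ordered $\xv$ and a fixed coordinate segment $s\mapsto\thet(s)=\thet+s\bm e_i$, $s\in[a,b]$. We treat $p_{r,\thet(s)}(\xv)$ and $\dot p_{\thet(s),j}(\xv)$ as functions $g$ of $s$ alone, and we follow the finite-crossing idea of Lemma~\ref{lem:zero-extension}, specialized to one dimension.

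\emph{Step 1: finitely many crossings.} Each $\pi_j\{\thet(s)\}=\sigma(s)-k(s)\{x_j-\mu(s)\}$ is a polynomial of degree at most two in $s$, in fact at most one along a coordinate direction. Hence the set $I=\{s:\xv\in S_{r,\thet(s)}\}$ is a finite union of relatively open intervals, and its boundary is a finite set $E$. The ordering constraint $x_1>\cdots>x_r$ does not depend on $s$. Off $\bar I$ both functions vanish identically. On $I$ they are smooth in $s$, being explicit compositions of $\log T_{\thet}$ and $T_{\thet}$ with $w_{\thet}(x_j)>0$, including the continuous extension at $k=0$. By direct differentiation of $p=e^{\ell}$ and $\dot p_{j}=p\,\partial_j\ell$, their derivatives there are $\dot p_{\thet(s),i}$ and $\ddot p_{\thet(s),ij}$.

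\emph{Step 2: boundary limits.} At a point $s_0\in E$ some $w_{\thet(s)}(x_m)\downarrow0$ as $s\to s_0$ from inside $I$. Only the extreme coordinates can do this: $m=1$ when $k>0$ and $m=r$ when $k<0$. At $k=0$ the support is unbounded and $E$ consists only of sign changes of $k$ where no $w$ vanishes.

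\emph{Case $k>0$, $T_1\to0$.} Here $p\propto T_1^{1-k}\times(\text{bounded})$. By \eqref{eq:D}, $\partial\ell$ is a combination of $T_1^{-k}$, $\log T_1$ and bounded terms, plus $T_r D_r$ terms. If $r=1$, then $T_r=T_1$; otherwise those terms stay bounded. The second derivatives of $\ell$ grow at most like $T_1^{-2k}$ times logarithmic factors. Hence $p$, $p\,\partial\ell$ and $p(\partial^2\ell+\partial\ell\,\partial\ell)$ are $O(T_1^{1-3k}|\log T_1|^2)$. We must check that this is at least integrable in $s$ near $s_0$. Since $T_1\asymp|s-s_0|^{1/k}$, the bound becomes $|s-s_0|^{(1-3k)/k}$ up to logarithms, which is integrable iff $(1-3k)/k>-1$, that is, $k<1/2$. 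This is exactly where $k_+<1/2$ on $V$ enters. Both $p$ and $\dot p_j$ tend to $0$ at $s_0$ as well, since their exponents $(1-k)/k$ and $(1-2k)/k$ are positive for $k<1/2$.

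\emph{Case $k<0$, $T_r\to\infty$.} The factor $e^{-T_r}$ dominates every polynomial in $T_r$, $T_r^{-k}$ and $\log T_r$, so all the functions and their derivatives tend to $0$ with bounded derivatives.

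\emph{Step 3: assemble.} The function $g$ is continuous on $[a,b]$: it is smooth on $I$, zero off $\bar I$, and has limit $0$ at each point of $E$ by Step 2. Its classical derivative exists off $E$ and is integrable on $[a,b]$ by Step 2. On each of the finitely many subintervals cut out by $E$, $g$ is $C^1$ inside and continuous up to the endpoints with an integrable derivative. Therefore $g(s)-g(s')=\int_{s'}^{s}g'$ holds on each piece, and summing over pieces gives absolute continuity on $[a,b]$, with derivatives as claimed outside the finite set $E$.

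The main obstacle is Step 2 for $\dot p_{\thet,j}$ when $k>0$. There the derivative $\ddot p$ may blow up at the endpoint, as $|s-s_0|^{(1-3k)/k}$ when $k\ge1/3$. So one cannot argue by bounded derivatives; we need the careful exponent bookkeeping above, tracking the powers $T_1^{-k}$ contributed by each $\partial_\mu$ or $\partial_\sigma$, and showing that integrability holds exactly because $k<1/2$.
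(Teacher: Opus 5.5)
Your proposal is correct and follows essentially the same route as the paper. The shared steps are: finitely many crossings, since each $\pi_j$ is affine along a coordinate segment; only $x_1$ (for $k>0$) or $x_r$ (for $k<0$) active at a crossing; the bound $|\ddot p_{\thet(s),ij}(\xv)|\lesssim T_1^{1-3k}$ up to logarithms, i.e.\ of order $u^{1/k-3}$ with $u=|s-s_0|$, integrable exactly because $k\le k_+<1/2$ on $V$ (the paper writes this via $b=w_{\thet(s)}(x_1)\asymp u$ and cites Lemma~\ref{lem:hess-D} for the second-derivative growth you assert without proof); exponential decay when $k<0$; and the fundamental theorem of calculus on each piece. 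One small slip: points with $k(s)=0$ do not belong to $E$ at all, because there $\pi_j=\sigma(s)>0$ for every $j$, so the remark about ``sign changes of $k$'' in $E$ should simply be dropped.
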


\begin{proof}[Proof of Theorem~\ref{thm:info-identity}]
Integrability of $\ddot{\boldsymbol\ell}_{r,\thetao}$ follows from the bound
\[
C(1+T_r)\sum_j\max(1,T_j^{-2k_0})(1+|\log T_j|)^4
\]
in \eqref{eq:hessian-pointwise}, since $T_1^{-2k_0}$ is integrable for $k_0<1/2$.

Let $\thet$ be interior to $V$. By Lemma~\ref{lem:ac-endpoint},
$p_{r,\thet+h\bm e_i}(\xv)-p_{r,\thet}(\xv)=\int_0^h\dot p_{\thet+s\bm e_i,i}(\xv)\,ds$
for every strictly ordered $\xv$. The bounds behind this identity depend on
$\xv$; integrability in $(\xv,s)$ comes instead from Lemma~\ref{lem:density-derivatives},
which gives $\int_0^{|h|}\!\int|\dot p_{\thet+s\bm e_i,i}|\,d\xv\,ds<\infty$. The integrand
is jointly measurable in $(\xv,s)$, being given by an explicit Borel formula on the
Borel set $\{(\xv,s):\xv\in S_{r,\thet+s\bm e_i}\}$ and equal to zero elsewhere. By
Fubini's theorem,
\[
0=\int p_{r,\thet+h\bm e_i}\,d\xv-\int p_{r,\thet}\,d\xv
=\int_0^h\Bigl\{\int\dot p_{\thet+s\bm e_i,i}\,d\xv\Bigr\}ds
\]
for all small $h$, and the inner integral is continuous in $s$. Hence
$\int\dot p_{\thet,i}\,d\xv=0$ for every $\thet$ interior to $V$. Applying the same
argument to $s\mapsto\dot p_{\thet+s\bm e_j,i}$, whose integral is therefore
constant near $\thetao$, gives $\int\ddot p_{\thet,ij}\,d\xv=0$. At $\thet=\thetao$
this is
$\Ep_{\thetao}\{\partial^2_{ij}\ell_{r,\thetao}+\partial_i\ell_{r,\thetao}\partial_j\ell_{r,\thetao}\}=0$.
\end{proof}

\begin{remark}[Where $k_0<1/2$ enters a second time]
In Lemma~\ref{lem:ac-endpoint} the threshold appears once more. At a crossing
$s^*$ with $k^*>0$, the quantity $b=w_{\thet(s)}(x_1)$, which is proportional to
the distance from $x_1$ to the moving endpoint, is of order
$u=|s-s^*|$, $T_1=b^{1/k}$, and each parameter derivative costs a factor $b^{-1}$,
so $|\ddot p_{\thet(s),ij}(\xv)|\le Cu^{1/k_+-3}(1+|\log u|)^4$. Since
$\int_0^1u^{\beta}(-\log u)^m\,du=m!/(\beta+1)^{m+1}$ for $\beta>-1$,
\[
\int_0^1u^{1/k_+-3}(1+|\log u|)^4\,du
=\sum_{m=0}^4\binom4m\,m!\Bigl(\frac{k_+}{1-2k_+}\Bigr)^{m+1},
\]
which is finite exactly when $k_+<1/2$ and blows up as $k_+\uparrow1/2$. The bound
is unbounded in $u$ for $1/3\le k^*<1/2$; only its integrability survives.
\end{remark}

\subsection{Proofs of Corollary~\ref{cor:lan} and Remark~\ref{rem:lan-failure}}
\label{sec:proof-lan}

(i) The parameter set $\{\sigma>0\}$ is open and the model is DQM at $\thetao$ by
Theorem~\ref{thm:dqm}. Theorem~7.2 of \citet{vanderVaart1998} then gives the
expansion, with mean-zero score and covariance $I_r(\thetao)$; the central limit
theorem gives the limit of $\bm\Delta_n$, and Le Cam's first lemma
\citep[Lemma~6.4 and Example~6.5]{vanderVaart1998} gives mutual contiguity. (ii) This is the
local asymptotic minimax theorem \citep[Theorem~8.11]{vanderVaart1998}, whose
assumptions are DQM, nonsingular information (Proposition~\ref{prop:info-pd})
and differentiability of $\psi$.

For Remark~\ref{rem:lan-failure}, let $A_n$ be the event that some
$X_i^{(1)}\ge\omega_0$. Then $P^n_{\thetao}(A_n)=0$, while by
\eqref{eq:dqm-necessity-mass} with $t=a/\sqrt n$,
$p_n:=P_{\thet_n}\{X^{(1)}\ge\omega_0\}\sim(k_0a/\sigma_0)^{1/k_0}n^{-1/(2k_0)}$ and
$P^n_{\thet_n}(A_n)=1-(1-p_n)^n\ge1-e^{-np_n}$. Here $np_n\to\infty$ if $k_0>1/2$
and $np_n\to a^2/(4\sigma_0^2)>0$ if $k_0=1/2$. Hence
$\liminf_nP^n_{\thet_n}(A_n)>0$, which contradicts contiguity
\citep[Section~6.2]{vanderVaart1998}.

\section{Discussion}
\label{sec:discussion}

For $r=1$, Theorem~\ref{thm:dqm} reduces to the result of
\citet{buchersegers2017maximum}, whose moving-support approach we follow. For
$r\ge2$, the Poisson-arrival representation reduces the ordered vector to
exponential spacings, and the proofs of Lemma~\ref{lem:sqrt-smooth} and
Lemma~\ref{lem:ac-endpoint} show that only $X^{(1)}$ (for $k>0$) or $X^{(r)}$ (for
$k<0$) can leave the support. The arrival representation and segmentwise absolute continuity also avoid
unjustified differentiation under moving integrals in the conditional score identity
\eqref{eq:conditional-score-mean} and in the information identity. They should
apply to other fixed-dimensional order-statistic likelihoods whose supports are
described by finitely many smooth endpoint inequalities.

The direct proof of Theorem~\ref{thm:info-identity} is independent of
maximum-likelihood theory. This separation is useful: estimator consistency
\citep[see, e.g.,][for consistency of the maximum likelihood estimator through
its uniform consistency]{AkahiraKashima1994}, score expansions and uniform
Hessian laws require additional arguments when the support moves, and none is
assumed here. The information identity is
therefore available as an input to subsequent estimation theory rather than
as a consequence of it.

The results concern the exact model with a fixed $r$ and independent blocks.
Allowing $r=r_n\to\infty$, selecting $r$ from the data, or accounting for serial
dependence \citep{BucherHaufs2026} or finite-block approximation
\citep{dombry2019maximum} requires further arguments. In the non-regular range
$k_0\ge1/2$, the leading coordinate has the endpoint behavior studied by
\citet{Smith1985}, but the non-regular estimation rates for $r\ge2$ are not
established here. The asymptotic theory of the maximum likelihood estimator,
which uses the results of this paper, is the subject of ongoing work.

\section{Conclusion}
\label{sec:conclusion}

For every fixed $r$, the regularity of the three-parameter $r$GEV model is
governed by the leading maximum: the model is DQM exactly when $k_0<1/2$, as for
block maxima, while each additional order statistic contributes a positive
semidefinite information term. The information identity and local asymptotic
normality hold in the same range and are properties of the model alone, so they
are available as inputs to the asymptotic theory of estimators and tests based on
a fixed number of upper order statistics.

\section*{Declarations}

\subsection*{ORCID}
Jihong Park: 0009-0003-3191-9968; ~~~
Yire Shin: 0000-0003-1297-5430; \\
Jeong-Soo Park: 0000-0002-8460-4869.

\subsection*{Funding}
Jihong Park's research was supported by the BK21 FOUR (Fostering Outstanding
Universities for Research, No.~5120200913674), funded by the Ministry of
Education (MOE, Korea) and the National Research Foundation of Korea (NRF).
Yire Shin's research was supported by the Basic Science Research Program through
the National Research Foundation of Korea (NRF), funded by the Ministry of
Education (RS-2025-25436608).

\subsection*{Conflict of interest}
The authors declare that they have no conflicts of interest.

\subsection*{Data availability}
No data were used. The numerical values in Section~\ref{sec:numerics} are
computed from the formulas given there.

\subsection*{Declaration of generative AI and AI-assisted technologies in the manuscript preparation process}
Generative artificial intelligence tools, namely ChatGPT (OpenAI) and Claude
(Anthropic), were used during this work. They assisted with exploring proof
strategies, drafting and checking mathematical derivations, and editing the manuscript
for clarity. All statements, proofs, numerical results, references and
conclusions were checked by the authors, who take full responsibility for the
content of this article.

\subsection*{Author contributions}
All authors contributed to the conception of the study, the development of the
methodology, and the derivation and verification of the theoretical results.
The first draft of the manuscript was written by Y. Shin and J.-S. Park, and the
other authors commented on and revised previous versions of the manuscript. All
authors read and approved the final manuscript.

\appendix

\section*{Appendices}
Appendix~\ref{sec:S-dqm} proves the sufficiency part of Theorem~\ref{thm:dqm}
and the positive definiteness of $I_1(\thetao)$ used in
Proposition~\ref{prop:info-pd}; Appendix~\ref{sec:S-hessian} proves
Lemmas~\ref{lem:density-derivatives} and~\ref{lem:ac-endpoint}, which are used in
the proof of Theorem~\ref{thm:info-identity}. Throughout,
$\Theta_{1/2}=\{(\mu,\sigma,k):\sigma>0,\ k<1/2\}$.

\section{Differentiability in quadratic mean and the block-maximum information}
\label{sec:S-dqm}

\begin{lemma}[Continuity of the fixed-$r$ information]
\label{lem:info-cont}
The information matrix $I_r(\thet)$ is finite and continuous on $\Theta_{1/2}$.
\end{lemma}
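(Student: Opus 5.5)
The plan is to move the whole computation to the arrival scale, where the law does not depend on $\thet$. Under $\Ep_{\thet}$, the vector $(T_{\thet}(X^{(1)}),\ldots,T_{\thet}(X^{(r)}))$ has the fixed law \eqref{eq:arrival}. The score \eqref{eq:joint-score} evaluated at $\thet$ is $F(\bm T;\thet)$, where $F$ is an explicit function of $\bm T=(T_1,\ldots,T_r)$ and $\thet$. This is because $x_j$ is a deterministic function of $T_j$ and $\thet$, so each $D_j$ is expressed through $T_j^{-k}$, $\log T_j$ and $\sigma,k$ alone, as \eqref{eq:D} shows. Hence
\[
I_r(\thet)=\int F(\bm t;\thet)F(\bm t;\thet)^T\,\rho_r(\bm t)\,d\bm t,
\]
with $\rho_r$ the density of the first $r$ Poisson arrivals, $\rho_r(\bm t)=e^{-t_r}\ind(0<t_1<\cdots<t_r)$. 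Both finiteness and continuity then reduce to statements about a fixed measure and an integrand that is continuous in $\thet$.

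\textbf{Step 1 (envelope).} On a compact box $V\subset\Theta_{1/2}$ with $c\le\sigma\le C$ and $-\bar k\le k\le k_+<1/2$, I will derive a uniform bound on $D$. The only delicate part is the $k$-component near $k=0$. There I will use $(u^{-k}-1)/k=-\int_0^1u^{-sk}\log u\,ds$, and a second-order Taylor remainder for the $1/k^2$ term, to obtain
\[
\|D_{\thet}\|\le C'\max(1,u^{-k_+},u^{\bar k})(1+|\log u|)^2
\]
uniformly on $V$. I expect this is the bound \eqref{eq:D-uniform} cited earlier. Inserting it into \eqref{eq:joint-score} gives
\[
\|F(\bm T;\thet)\|\le C''(1+T_r)\sum_j\max(1,T_j^{-k_+},T_j^{\bar k})(1+|\log T_j|)^2 .
\]

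\textbf{Step 2 (integrability).} The square of this envelope is integrable under $\rho_r$. Large values of $T_j$ are harmless because the $\operatorname{Gamma}$ tails decay exponentially and all the factors grow polynomially. Small values matter only for $j=1$, where $T_1\sim\operatorname{Exp}(1)$ has a density bounded near $0$, and $T_1^{-2k_+}(1+|\log T_1|)^4$ is integrable at $0$ exactly when $2k_+<1$. For $j\ge2$, $T_j\sim\operatorname{Gamma}(j,1)$ has density $O(t^{j-1})$ near $0$, which gives extra room. Finiteness of $I_r(\thet)$ follows.

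\textbf{Step 3 (continuity).} For fixed $\bm t$, the map $\thet\mapsto F(\bm t;\thet)$ is continuous, including across $k=0$, where the limits in \eqref{eq:D} are continuous. Dominated convergence with the envelope from Steps 1 and 2 then gives continuity of $I_r$ on each box, hence on all of $\Theta_{1/2}$. No moving support appears, because the reparametrization absorbs it.

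The main obstacle is Step 1: the uniform control of the $k$-component of $D$ across $k=0$, and of $T_1^{-k}$ when $k>0$ is close to $1/2$. This is the only place where $k_+<1/2$ is used, and it is the same mechanism that enters the necessity part of Theorem~\ref{thm:dqm}.
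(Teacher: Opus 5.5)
Your proposal is correct and follows essentially the paper's proof: you pass to the parameter-free arrival law, bound $D_{\thet}$ uniformly through the integral Taylor representation (this is \eqref{eq:D-uniform}), build a square-integrable envelope whose only singular factor is $T_1^{-2k_+}$ with $2k_+<1$, and conclude by dominated convergence. Two small corrections for Step 2. First, the squared envelope contains joint terms such as $(1+T_r)^2T_1^{-2k_+}(1+|\log T_1|)^4$, and a marginal argument does not by itself cover them; the paper uses $t_1\le t_j\le t_r$ together with the independence of $T_1$ and $T_r-T_1$, and H\"older with the strict slack $2k_+<1$ would also work. Second, the condition $k_+<1/2$ is used in Step 2, not in Step 1.
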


\begin{proof}
The difficulty is that $I_r(\thet)$ is an expectation under $P_{r,\thet}$, so
both the integrand and the measure change with $\thet$. We remove the second
dependence by a change of variables. Under $P_{r,\thet}$ the transformed vector
$(T_{\thet}(X^{(1)}),\ldots,T_{\thet}(X^{(r)}))$ has the parameter-free arrival
law \eqref{eq:arrival}. Moreover, by \eqref{eq:D} and \eqref{eq:joint-score},
the score depends on the observation only through $t_j=T_{\thet}(x_j)$:
\[
\dot{\boldsymbol\ell}_{r,\thet}(\xv)=\bm\psi_{\thet}(t_1,\ldots,t_r),
\qquad
\bm\psi_{\thet}(\bm t)=-\frac r\sigma\bm e_\sigma-t_rD_{\thet}(t_r)
+(1-k)\sum_{j=1}^rD_{\thet}(t_j)-\bm e_k\sum_{j=1}^r\log t_j,
\]
where $D_{\thet}(t)$ denotes \eqref{eq:D} written as a function of $t$; it
involves only $\sigma$, $k$ and $t$. Hence
$I_r(\thet)=\Ep\{\bm\psi_{\thet}(T_1,\ldots,T_r)\bm\psi_{\thet}(T_1,\ldots,T_r)^T\}$,
where $(T_1,\ldots,T_r)$ has the law \eqref{eq:arrival}, which does not
depend on $\thet$. (In particular, $I_r(\thet)$ does not depend on $\mu$.)
Continuity of $I_r$ therefore follows from continuity of
$\thet\mapsto\bm\psi_{\thet}(\bm t)$ for each fixed $\bm t$, together with a
square-integrable envelope for $\sup_{\thet\in V}\|\bm\psi_{\thet}\|$ on compact
sets $V$, which we now construct.
Fix a compact $V\subset\Theta_{1/2}$ and choose constants
$k_+<1/2$, $\bar k<\infty$, and $0<c<C<\infty$ such that
\[
-\bar k\le k\le k_+,
\qquad c\le\sigma\le C,
\qquad \thet\in V.
\]
The apparent singularities at $k=0$ in \eqref{eq:D} are removable.  Writing
$L=\log t$, the integral form of Taylor's formula gives
\begin{equation}
\begin{aligned}
D_\mu&=\sigma^{-1}e^{-kL},\\
D_\sigma&=-\frac{L}{\sigma}\int_0^1e^{-vkL}\,dv,\\
D_k&=-L^2\int_0^1(1-v)e^{-vkL}\,dv,
\end{aligned}
\label{eq:D-integral}
\end{equation}
which at $k=0$ reduce to $\sigma^{-1}$, $-L/\sigma$ and $-L^2/2$.
Since $e^{-vkL}=t^{-vk}\le\max(1,t^{-k})$ for $0\le v\le1$,
\begin{equation}
\|D_{\thet}(t)\|
\le
C_1\max(1,t^{-k})\{1+|\log t|\}^2
\le
C_1\{1+t^{-k_+}+t^{\bar k}\}\{1+|\log t|\}^2,
\label{eq:D-uniform}
\end{equation}
uniformly over $\sigma\in[c,C]$, for every real $k$ in the first inequality
and over $V$ in the second. The envelope is stated in the fixed arrival
variables $0<t_1<\cdots<t_r$, which do not depend on $\thet$. Since
$t_1\le t_j\le t_r$, we have $|\log t_j|\le |\log t_1|+\log(1+t_r)$, and the
definition of $\bm\psi_{\thet}$ gives, for some finite $M$,
\[
\sup_{\thet\in V}\|\bm\psi_{\thet}(\bm t)\|
\le
C_2\{1+t_1^{-k_+}+t_r^M\}
\{1+|\log t_1|^2+\log^2(1+t_r)\}.
\]
We evaluate this envelope at the arrival times $(T_1,\ldots,T_r)$ of
\eqref{eq:arrival}.

To verify square integrability, write $T_r=T_1+\Delta_r$, where
$\Delta_r=T_r-T_1\sim\operatorname{Gamma}(r-1,1)$ is independent of
$T_1\sim\operatorname{Exp}(1)$ (and $\Delta_1=0$). Expanding the square and
using $(T_1+\Delta_r)^q\le C_q(T_1^q+\Delta_r^q)$ reduces the expectation to
products of one-dimensional moments of $T_1$ and $\Delta_r$. The only
singular factor is a power $T_1^{-2k_+}$ times logarithms, which is integrable
because $2k_+<1$; all other moments are finite. Thus the squared envelope is
integrable. Since $\thet\mapsto\bm\psi_{\thet}(\bm t)$ is continuous for each
fixed $\bm t$ by \eqref{eq:D-integral}, the dominated convergence theorem
 completes the proof.
\end{proof}

\begin{lemma}[Smoothness of the square-root density]
\label{lem:sqrt-smooth}
Let $s_{\thet}=p_{r,\thet}^{1/2}$, extended by zero outside $S_{r,\thet}$, and let
$V$ be an open convex set whose closure lies in $\{(\mu,\sigma,k):k<1/3\}$ and
on which $\sigma$ is bounded away from zero and infinity. Then, for every
$\xv\in\R^r$, the map $\thet\mapsto s_{\thet}(\xv)$ is continuously
differentiable on $V$, with
$\nabla s_{\thet}=\tfrac12s_{\thet}\dot{\boldsymbol\ell}_{r,\thet}$ on
$S_{r,\thet}$ and $\nabla s_{\thet}=0$ off it.
\end{lemma}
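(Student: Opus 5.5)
The plan is to apply Lemma~\ref{lem:zero-extension} with $m=r$, the fixed point $\xv$, $g(\thet)=s_{\thet}(\xv)$ and $c=0$. If $\xv$ is not strictly ordered, then $s_{\thet}(\xv)=0$ for all $\thet$ and there is nothing to prove. So fix a strictly ordered $\xv$. Then $O=\{\thet\in V:\pi_j(\thet)>0,\ j=1,\ldots,r\}$ is exactly the set of $\thet$ with $\xv\in S_{r,\thet}$.

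Conditions (i) and (ii) are immediate. On $O$ every $T_{\thet}(x_j)$ is positive and smooth in $\thet$, including across $k=0$ by the integral representation \eqref{eq:D-integral}. Hence $s_{\thet}=\exp(\ell_{r,\thet}/2)$ is $C^1$ there, with gradient $\tfrac12 s_{\thet}\dot{\boldsymbol\ell}_{r,\thet}$ given by \eqref{eq:joint-score}. Off $O$ we have $s_{\thet}=0$ by definition.

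The work is condition (iii). Take $\thet^*\in V\cap\partial O$. Some $\pi_j(\thet^*)=0$, so $w_{\thet^*}(x_j)=0$ and $k^*\neq0$. The key structural observation is which coordinates can hit the boundary. If $k^*>0$, then $w_{\thet}$ is decreasing in $x$, so $w_{\thet^*}(x_1)\le w_{\thet^*}(x_j)$. The binding coordinate is therefore $x_1$, and $T_1=w^{1/k}\to0$ while $T_r$ stays bounded. If $k^*<0$, then $w$ is increasing in $x$, the binding coordinate is $x_r$, and $T_r=w^{1/k}\to\infty$. Some intermediate $x_j$ may tie with the binding one only if several $w$'s vanish together, but strict ordering of $\xv$ rules that out.

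Write $b=w_{\thet}(x_{\mathrm{bind}})\downarrow0$ as $\thet\to\thet^*$ within $O$, with the other $w_{\thet}(x_j)$ bounded away from $0$. Each entry of $D_{\thet}$ at the binding coordinate is $O(b^{-1}(1+|\log b|))$, since $T^{-k}=w^{-1}$.

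In the case $k^*<0$, the factor $\exp\{-T_r/2\}=\exp\{-b^{1/k}/2\}$ kills any power of $b^{-1}$. So $s_{\thet}\to0$ and $s_{\thet}\|\dot{\boldsymbol\ell}_{r,\thet}\|\to0$; the lower endpoint never causes trouble.

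In the case $k^*>0$, only the factor $T_1^{(1-k)/2}=b^{(1-k)/(2k)}$ vanishes. The score is $O(b^{-1}|\log b|)$, the $-T_rD_r$ term being bounded and the $(1-k)D_1$ term being the dominant one. Hence
\[
s_{\thet}\|\dot{\boldsymbol\ell}_{r,\thet}\|\lesssim b^{(1-k)/(2k)-1}|\log b|,
\]
which tends to $0$ iff $(1-k)/(2k)>1$, i.e.\ $k<1/3$. This is where the hypothesis on the closure of $V$ enters. Near $\thet^*$ we have $k\le k^*+\epsilon<1/3$, so the exponent is uniformly positive and the limit is $0$; clearly $s_{\thet}\to0$ as well.

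Lemma~\ref{lem:zero-extension} then yields that $s_{\thet}(\xv)$ is continuously differentiable on $V$, with zero gradient off $O$, which is the claim. I expect the main obstacle to be organizing (iii) carefully. This means showing rigorously that only $x_1$ (for $k>0$) or $x_r$ (for $k<0$) can be binding, and controlling all terms of \eqref{eq:joint-score} uniformly as $\thet\to\thet^*$, including the $\log T_j$ term and the behaviour near $k=0$, where no boundary point exists.
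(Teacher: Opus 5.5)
Your proposal is correct and matches the paper's proof step for step. Both apply Lemma~\ref{lem:zero-extension} with $g(\thet)=s_{\thet}(\xv)$ and $c=0$, observe that $k^*\neq0$ and that only $x_1$ (for $k^*>0$) or $x_r$ (for $k^*<0$) can be binding, let $e^{-T_r/2}$ kill the polynomial growth in the lower-endpoint case, and get the same exponent count, since your $b^{(1-3k)/(2k)}$ is the paper's $T_1^{(1-3k)/2}$ with $b=T_1^{k}$; the paper justifies smoothness across $k=0$ more directly, via the real-analytic form $\log T_{\thet}(x_j)=-\int_0^{z_j}(1-kv)^{-1}\,dv$, rather than through \eqref{eq:D-integral}.
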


\begin{proof}
The value $1/3$ is the threshold for pointwise continuous
differentiability of the square-root density; the DQM threshold itself
is $1/2$.
	Fix $\xv\in\R^r$. If $\xv$ is not strictly ordered, $s_{\thet}(\xv)=0$ for
every $\thet$. Otherwise apply Lemma~\ref{lem:zero-extension} with $m=r$,
$g(\thet)=s_{\thet}(\xv)$ and $c=0$; then $O$ is the set of $\thet\in V$ with
$\xv\in S_{r,\thet}$.

On $O$, $\log T_{\thet}(x_j)=-\int_0^{z_j}(1-kv)^{-1}\,dv$ with
$z_j=(x_j-\mu)/\sigma$ is real-analytic in $\thet$, including across $k=0$,
so (i) holds and $\nabla s_{\thet}=\tfrac12s_{\thet}\dot{\boldsymbol\ell}_{r,\thet}$.
For (iii), let $\thet^*\in V\cap\partial O$; then $\pi_j(\thet^*)\ge0$ for all
$j$, with equality for at least one $j$. If $k^*=0$, then
$\pi_j(\thet^*)=\sigma^*>0$ for all $j$, which is impossible; hence $k^*\ne0$.
For $k^*\ne0$, $\pi_j(\thet^*)=\sigma^*-k^*(x_j-\mu^*)$ is strictly monotone
in $x_j$, so the distinct values $x_1>\cdots>x_r$ give distinct values
$\pi_j(\thet^*)$, and exactly one of them vanishes: the smallest, which is
$\pi_1$ if $k^*>0$ and $\pi_r$ if $k^*<0$. In other words, a parameter path
can leave the support only through $x_1$ at an upper endpoint or through
$x_r$ at a lower endpoint. All other $T_{\thet}(x_j)$, and the score terms
involving them, converge to finite positive limits as $\thet\to\thet^*$.
Write $T_j=T_{\thet}(x_j)$.
\begin{itemize}
\item If $k^*>0$, then $\pi_1(\thet^*)=0$ and $T_1\to0$. The other factors
in \eqref{eq:rgev-density} stay bounded, so $s_{\thet}\le CT_1^{(1-k)/2}\to0$.
In \eqref{eq:joint-score} the terms involving $j=1$ are
$(1-k)D_1-\bm e_k\log T_1$ (and also $-T_1D_1$ when $r=1$); by
\eqref{eq:D-uniform} they are bounded by $CT_1^{-k}(1+|\log T_1|)^2$. Since
$\nabla s_{\thet}=\tfrac12 s_{\thet}\dot{\boldsymbol\ell}_{r,\thet}$, the
exponents add: $(1-k)/2-k=(1-3k)/2$. Hence
\[
\|\nabla s_{\thet}(\xv)\|\le CT_1^{(1-3k)/2}(1+|\log T_1|)^2\longrightarrow0,
\]
because $k\to k^*<1/3$.
\item If $k^*<0$, then $\pi_r(\thet^*)=0$ and $T_r\to\infty$. Now
$s_{\thet}\le Ce^{-T_r/2}T_r^{(1-k)/2}\to0$, and by \eqref{eq:D-uniform} the
terms involving $j=r$ are bounded by $CT_r^{1+|k|}(1+\log T_r)^2$. Hence
$\|\nabla s_{\thet}(\xv)\|\le Ce^{-T_r/2}T_r^{3(1+|k|)/2}(1+\log T_r)^2\to0$.
\end{itemize}
The constants may depend on $\xv$, which is allowed in (iii).
Lemma~\ref{lem:zero-extension} now shows that $\thet\mapsto s_{\thet}(\xv)$ is
continuously differentiable on $V$, with $\nabla s_{\thet}=0$ off the support.
The bounds \eqref{eq:D-uniform} used here contain no factor $1/k$, so the
argument covers both signs of $k$ near $k=0$.
\end{proof}

\begin{proof}[Proof of sufficiency in Theorem~\ref{thm:dqm}]
	Let
	\[
	s_{\thet}(\xv)=p_{r,\thet}(\xv)^{1/2},
	\]
	where $s_{\thet}$ is extended by zero outside $S_{r,\thet}$.
	The argument adapts the moving-support proof of
	\citet[Appendix C]{buchersegers2017maximum} to the fixed-$r$ joint density.
	
	\medskip\noindent
	\emph{Case 1: $k_0<1/3$.}
	Choose an open convex neighborhood $V$ of $\thetao$ whose closure
	lies in $\{k<1/3\}$ and has $\sigma$ bounded away from zero and infinity.
	By Lemma~\ref{lem:sqrt-smooth}, $\thet\mapsto s_{\thet}(\xv)$ is
	continuously differentiable on $V$ for every $\xv$.
	Lemma~7.6 of \citet{vanderVaart1998} states that a family is differentiable
	in quadratic mean at $\thetao$ if, for every $\xv$, the map
	$\thet\mapsto s_{\thet}(\xv)$ is continuously differentiable near $\thetao$
	and the information matrix is finite and continuous there. The first
	condition has just been verified and the second is
	Lemma~\ref{lem:info-cont}. Hence the family is differentiable in quadratic
	mean at $\thetao$.

	\medskip\noindent
	\emph{Case 2: $1/3\le k_0<1/2$.}
	Let $t_n\downarrow0$, $\bm h_n\to\bm h$, and put
	\[
	\thet_n=\thetao+t_n\bm h_n.
	\]
	It suffices to prove
	\begin{equation}
	\int
	\left[
	\frac{s_{\thet_n}(\xv)-s_{\thetao}(\xv)}{t_n}
	-
	\frac12
	\bm h^T
	\dot{\boldsymbol\ell}_{r,\thetao}(\xv)
	s_{\thetao}(\xv)
	\right]^2
	d\xv
	\longrightarrow0.
	\label{eq:dqm-target}
	\end{equation}
	
	For $u\in[0,1]$, let
	\[
	\thet_{n,u}
	=
	\thetao+ut_n\bm h_n.
	\]
	Since $k_0>0$, all sufficiently close parameter values have a finite
	upper endpoint
	\[
	\omega(\thet)=\mu+\frac{\sigma}{k}.
	\]
	Define
	\[
	\alpha_n
	=
	\inf_{0\le u\le1}\omega(\thet_{n,u}),
	\qquad
	\beta_n
	=
	\sup_{0\le u\le1}\omega(\thet_{n,u}).
	\]
	Smoothness of $\omega$ in a neighborhood of $\thetao$ gives
	\begin{equation}
	|\alpha_n-\omega_0|
	+
	|\beta_n-\omega_0|
	=
	O(t_n),
	\qquad
	\beta_n-\alpha_n
	=
	O(t_n),
	\label{eq:dqm-endpoint-width}
	\end{equation}
	where $\omega_0=\omega(\thetao)$.
	
	On the region $\{x_1<\alpha_n\}$, the entire line segment
	$\{\thet_{n,u}:0\le u\le1\}$ has common support.  Hence
	\[
	\frac{s_{\thet_n}(\xv)-s_{\thetao}(\xv)}{t_n}
	=
	\int_0^1
	\nabla_{\thet}s_{\thet_{n,u}}(\xv)^T
	\bm h_n\,du.
	\]
	Using Cauchy--Schwarz, Fubini's theorem, and
	\[
	\nabla_{\thet}s_{\thet}
	=
	\frac12s_{\thet}
	\dot{\boldsymbol\ell}_{r,\thet}
	\]
	on the interior of the support, we obtain
	\begin{align}
	&\int_{\{x_1<\alpha_n\}}
	\left\{
	\frac{s_{\thet_n}-s_{\thetao}}{t_n}
	\right\}^2d\xv
	\nonumber\\
	&\qquad\le
	\frac14
	\int_0^1
	\bm h_n^T
	I_r(\thet_{n,u})
	\bm h_n\,du.
	\label{eq:dqm-common-support}
	\end{align}
	By Lemma~\ref{lem:info-cont}, the right-hand side converges to
	\[
	\frac14\bm h^T I_r(\thetao)\bm h.
	\]
	
	It remains to control the strip $\{\alpha_n\le x_1\le\beta_n\}$ in which the
	support boundary moves. Both endpoints $\omega(\thetao)$ and
	$\omega(\thet_n)$ lie between $\alpha_n$ and $\beta_n$. On
	$\{x_1>\beta_n\}$ both zero-extended square-root densities vanish, so this
	region contributes nothing. On the strip we use $(a-b)^2\le2a^2+2b^2$ and
	$\int_A s_{\thet}^2\,d\xv=P_{\thet}(A)$; thus
	\begin{align}
	&\int_{\{\alpha_n\le x_1\le\beta_n\}}
	\left\{
	\frac{s_{\thet_n}-s_{\thetao}}{t_n}
	\right\}^2d\xv
	\nonumber\\
	&\qquad\le
	2t_n^{-2}
	\left[
	P_{\thet_n}\{\alpha_n\le X^{(1)}\le\beta_n\}
	+
	P_{\thetao}\{\alpha_n\le X^{(1)}\le\beta_n\}
	\right].
	\label{eq:dqm-boundary-strip}
	\end{align}
	
	The first marginal of the $r$GEV distribution is the ordinary GEV
	distribution.  More generally, if
	$\widetilde\thet_n\to\thetao$, with shape component
	$\widetilde k_n$, and $d_n=O(t_n)$, then
	\begin{align}
	&P_{\widetilde\thet_n}
	\left\{
	X^{(1)}
	\ge
	\omega(\widetilde\thet_n)-d_n
	\right\}
	\nonumber\\
	&\qquad=
	1-
	\exp\left[
	-
	\left\{
	\frac{\widetilde k_n d_n}{\widetilde\sigma_n}
	\right\}^{1/\widetilde k_n}
	\right]
	\le
	C d_n^{\,1/\widetilde k_n}.
	\label{eq:dqm-endpoint-tail}
	\end{align}
	(The equality evaluates the GEV distribution function at
	$\omega(\widetilde\thet_n)-d_n$, where
	$T_{\widetilde\thet_n}=\{\widetilde k_nd_n/\widetilde\sigma_n\}^{1/\widetilde k_n}$;
	the inequality uses $1-e^{-x}\le x$.)
	Apply \eqref{eq:dqm-endpoint-tail} twice with $d_n=\beta_n-\alpha_n$, which
	is $O(t_n)$ by \eqref{eq:dqm-endpoint-width}: once with
	$\widetilde\thet_n=\thet_n$, since
	$\{X^{(1)}\ge\alpha_n\}\subset\{X^{(1)}\ge\omega(\thet_n)-d_n\}$, and once
	with $\widetilde\thet_n=\thetao$ in the same way. Since $k_0<1/2$, there
	exists $\delta>0$ such that, eventually,
	\[
	\frac1{\widetilde k_n}>2+\delta,
	\qquad\text{so}\qquad
	t_n^{-2}d_n^{\,1/\widetilde k_n}=O(t_n^{\delta})\longrightarrow0.
	\]
	Hence the right-hand side of \eqref{eq:dqm-boundary-strip} is $o(1)$. This
	is exactly where the condition $k_0<1/2$ enters: the probability mass in a
	strip of width $O(t_n)$ below the endpoint must be $o(t_n^2)$.
	
	For almost every fixed $\xv$, the difference quotient in
	\eqref{eq:dqm-target} converges pointwise. Indeed, if $x_1<\omega_0$, then
	$x_1<\alpha_n$ eventually and the quotient is an average of derivatives
	along a segment inside the support; if $x_1>\omega_0$, both terms vanish
	eventually; and $\{x_1=\omega_0\}$ is a null set. The limit is
	\[
	\frac12
	\bm h^T
	\dot{\boldsymbol\ell}_{r,\thetao}(\xv)
	s_{\thetao}(\xv).
	\]
	Moreover, \eqref{eq:dqm-common-support} and the vanishing boundary
	strip imply
	\[
	\limsup_{n\to\infty}
	\int
	\left\{
	\frac{s_{\thet_n}-s_{\thetao}}{t_n}
	\right\}^2d\xv
	\le
	\frac14
	\bm h^T I_r(\thetao)\bm h,
	\]
	which is the squared $L^2$ norm of the pointwise limit. By Fatou's lemma
	the reverse inequality holds for the $\liminf$, so the squared norms
	converge. Proposition~2.29 of \citet{vanderVaart1998} states that
	convergence almost everywhere together with
	$\limsup_n\|f_n\|_{L^p}\le\|f\|_{L^p}<\infty$ implies
	$\|f_n-f\|_{L^p}\to0$. With $p=2$, this upgrades the pointwise convergence
	to \eqref{eq:dqm-target}; the moving strip has already been shown to carry
	vanishing squared mass.  Hence the model is differentiable in
	quadratic mean also for $1/3\le k_0<1/2$.

	The case $k_0\ge1/2$ (necessity) is proved in Section~\ref{sec:proof-dqm}. This completes the proof.
\end{proof}

\begin{lemma}[Positive definiteness of the block-maximum information]
\label{lem:I1-pd}
If $k_0<1/2$, then $I_1(\thetao)$ is positive definite.
\end{lemma}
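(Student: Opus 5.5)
We will show that $\bm a^TI_1(\thetao)\bm a>0$ for every $\bm a\ne\bm 0$. By Lemma~\ref{lem:info-cont}, $I_1(\thetao)$ is finite, and
$\bm a^TI_1(\thetao)\bm a=\Ep\{\bm a^T\bm\psi_{\thetao}(T_1)\}^2$ with $T_1\sim\operatorname{Exp}(1)$. Here, for $r=1$,
\[
\bm\psi_{\thetao}(t)=-\sigma_0^{-1}\bm e_\sigma+(1-k_0-t)D_{\thetao}(t)-\bm e_k\log t .
\]
If the quadratic form vanished, then $\bm a^T\bm\psi_{\thetao}(t)=0$ for Lebesgue-almost every $t>0$. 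By \eqref{eq:D} and \eqref{eq:D-integral}, $t\mapsto\bm\psi_{\thetao}(t)$ is real-analytic (hence continuous) on $(0,\infty)$, so the identity would hold for every $t>0$. The task is therefore to show that the three scalar functions forming $\bm\psi_{\thetao}$ are linearly independent on $(0,\infty)$.

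\emph{Case $k_0\ne0$.} Write $L=\log t$ and $\bm a=(a_\mu,a_\sigma,a_k)$. Then $\bm a^TD_{\thetao}(t)=\alpha t^{-k_0}+\beta+\gamma L$, where
\[
\alpha=\frac{a_\mu}{\sigma_0}+\frac{a_\sigma}{k_0\sigma_0}-\frac{a_k}{k_0^2},\qquad
\beta=-\frac{a_\sigma}{k_0\sigma_0}+\frac{a_k}{k_0^2},\qquad
\gamma=-\frac{a_k}{k_0}.
\]
Hence $\bm a^T\bm\psi_{\thetao}(t)$ is a linear combination of the functions $t^{1-k_0}$, $tL$, $t$, $t^{-k_0}$, $L$ and $1$. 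The leading terms are $-\alpha t^{1-k_0}-\gamma tL-\beta t$.

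We will use the standard fact that finitely many functions $t^{p}L^{q}$ with distinct pairs $(p,q)$ are linearly independent on $(0,\infty)$. This follows by comparing growth as $t\to\infty$, or equivalently by viewing them as solutions of an Euler-type ODE.

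The one place requiring care is a coincidence of exponents. Since $k_0<1/2$, we have $1-k_0\notin\{0,1\}$, so $t^{1-k_0}$ never coincides with $t$ or $1$. However, $t^{-k_0}=t$ when $k_0=-1$. We therefore read off only coefficients that are unaffected by such collisions:
\begin{itemize}
\item $tL$ appears only once, so $\gamma=0$, hence $a_k=0$.
\item $t^{1-k_0}$ is distinct from every other function in the list, because $1-k_0\ne1,0,-k_0$. Hence $\alpha=0$.
\item Once $\alpha=0$, the term $\alpha(1-k_0)t^{-k_0}$ drops out, so the coefficient of $t$ is $-\beta$, whatever the value of $k_0$. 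Hence $\beta=0$.
\end{itemize}
Back-substituting, $a_k=0$ and $\beta=0$ give $a_\sigma=0$, and then $\alpha=0$ gives $a_\mu=0$.

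\emph{Case $k_0=0$.} By \eqref{eq:D-integral}, $D_{\thetao}=(\sigma_0^{-1},-L/\sigma_0,-L^2/2)$. The leading terms of $\bm a^T\bm\psi_{\thetao}(t)$ are then $-t\,(a_\mu\sigma_0^{-1}-a_\sigma L/\sigma_0-a_kL^2/2)$. Independence of $t$, $tL$ and $tL^2$ from each other and from the lower-order terms ($1$, $L$, $L^2$, $L^3$ without a factor $t$) forces $a_k=a_\sigma=a_\mu=0$.

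In both cases $\bm a=\bm 0$, which contradicts $\bm a\ne\bm 0$. Hence $I_1(\thetao)>0$.

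The main obstacle is the bookkeeping of exponent collisions among $t^{1-k_0}$, $t^{-k_0}$, $t$ and $1$ (for example at $k_0=-1$). Ordering the coefficient extraction as $tL$, then $t^{1-k_0}$, then $t$ handles these collisions. The hypothesis $k_0<1/2$ is used only to guarantee finiteness of $I_1(\thetao)$ (via Lemma~\ref{lem:info-cont}) and to exclude $1-k_0\in\{0,1\}$ apart from $k_0=0$.
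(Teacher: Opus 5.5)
Your proof is correct. It proves the linear independence of the three score components by a different mechanism from the paper's.

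The paper argues locally at $t=1$. It expands $S_\mu$, $S_\sigma$, $S_k$ to third order in $e=t-1$ and evaluates the vanishing combination and its first two derivatives at $t=1$. This gives a $3\times3$ system with determinant $-(k+1)/\sigma^2$. The zero at $k=-1$ forces a second system built from the zeroth, first and third derivatives, and $k=0$ is reached by continuity in $k$ of the coefficient matrix.

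You instead use the global algebraic structure. Every component is a finite combination of functions $t^pL^q$, which are exponential polynomials $x^qe^{px}$ in $x=\log t$, so independence can be read off coefficient by coefficient. This buys several things:
\begin{itemize}
\item It avoids the Taylor bookkeeping and the determinants.
\item It treats $k_0=0$ directly from the explicit limit of $D_{\thetao}$.
\item It shows exactly which exponents can collide. The only collision, $t^{-k_0}=t$ at $k_0=-1$, is neutralized simply by extracting the coefficients of $tL$ and $t^{1-k_0}$ before that of $t$.
\end{itemize}
The paper's Wronskian-type argument, in turn, uses nothing about the functions beyond smoothness near a single point, and its computations are mechanical to verify.

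One harmless slip: for $k_0=0$ the terms without a factor $t$ are only $1$, $L$ and $L^2$. No $L^3$ occurs in the score, though listing it does no damage.
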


\begin{proof}
Ordinary-GEV information calculations go back to
\citet{prescott1980maximum}; we give an explicit score-based
verification here, rather than assume invertibility.
	By Lemma~\ref{lem:info-cont}, the ordinary GEV score has finite second
	moments when $k_0<1/2$, so it suffices to establish linear independence
	of its three components in $L^2(P_{\thetao})$.
	
	Let
	\[
	T=T_{\thetao}(X^{(1)})\sim\operatorname{Exp}(1).
	\]
	For $k\ne0$, the ordinary GEV score components, written as functions
	of $t>0$, are
	\[
	S_\mu(t)
	=
	\frac{(1-k-t)t^{-k}}{\sigma},
	\]
	\[
	S_\sigma(t)
	=
	-\frac1{\sigma}
	+
	\frac{1-k-t}{k\sigma}
	(t^{-k}-1),
	\]
	and
	\[
	S_k(t)
	=
	-\log t
	+
	(1-k-t)
	\left[
	-\frac{\log t}{k}
	-\frac{t^{-k}-1}{k^2}
	\right],
	\]
	with the corresponding continuous limits at $k=0$.
	
	Suppose that, for some constants $a,b,c$,
	\begin{equation}
	aS_\mu(T)+bS_\sigma(T)+cS_k(T)=0
	\qquad\text{a.s.}
	\label{eq:score-linear-combination}
	\end{equation}
	Since $T$ has a strictly positive density on $(0,\infty)$ and the
	left-hand side is continuous in $t$, the identity in
	\eqref{eq:score-linear-combination} must hold for every $t>0$. The
	left-hand side is moreover infinitely differentiable in $t>0$, so all of
	its derivatives vanish identically. We use those at $t=1$, where
	$t^{-k}-1$ and $\log t$ vanish and the expansions are simplest; three
	linearly independent equations in $(a,b,c)$ then force $a=b=c=0$.
	
	The derivative matrices below follow from the Taylor expansions of
$t^{-k}$ and $\log t=(t-1)-(t-1)^2/2+(t-1)^3/3+O((t-1)^4)$ at $t=1$. With
$e=t-1$, they give
\[
\begin{aligned}
\sigma S_\mu(t)&=-k+(k^2-1)e+\Bigl\{k-\frac{k^2(k+1)}2\Bigr\}e^2\\
&\quad+\Bigl\{\frac{k^2(k+1)(k+2)}6-\frac{k(k+1)}2\Bigr\}e^3+O(e^4),\\
\sigma S_\sigma(t)&=-1+ke+\Bigl\{1-\frac{k(k+1)}2\Bigr\}e^2
+\Bigl\{\frac{k(k+1)(k+2)}6-\frac{k+1}2\Bigr\}e^3+O(e^4),\\
S_k(t)&=-e+\frac{1+k}2e^2+\frac{1-3k-k^2}6e^3+O(e^4).
\end{aligned}
\]
The $m$th derivative at $t=1$ is $m!$ times the coefficient of $e^m$.
For $k\ne-1$, evaluate this identity and its first two derivatives
	with respect to $t$ at $t=1$.  The resulting linear system for
	$(a,b,c)^T$ has coefficient matrix
	\[
	\begin{pmatrix}
	-k/\sigma
	&
	-1/\sigma
	&
	0
	\\[1ex]
	(k^2-1)/\sigma
	&
	k/\sigma
	&
	-1
	\\[1ex]
	k(-k^2-k+2)/\sigma
	&
	(-k^2-k+2)/\sigma
	&
	k+1
	\end{pmatrix}.
	\]
	Expanding along the first row, its determinant is
	\[
	\frac1{\sigma^2}\bigl[-k\{k(k+1)+(2-k-k^2)\}+(k^2-1)(k+1)+k(2-k-k^2)\bigr]
	=\frac{-2k+(k-1)}{\sigma^2}
	=-\frac{k+1}{\sigma^2},
	\]
	which is nonzero for $k\ne-1$.  Hence
	\[
	a=b=c=0.
	\]
	
	When $k=-1$, the preceding determinant vanishes, but using instead the
	zeroth, first, and third derivatives at $t=1$ gives the coefficient
	matrix
	\[
	\begin{pmatrix}
	1/\sigma
	&
	-1/\sigma
	&
	0
	\\[1ex]
	0
	&
	-1/\sigma
	&
	-1
	\\[1ex]
	0
	&
	0
	&
	3
	\end{pmatrix},
	\]
	whose determinant is
	\[
	-\frac{3}{\sigma^2}\ne0.
	\]
	Thus $a=b=c=0$ also for $k=-1$.
	
	At $k=0$ the score components are the continuous limits of those for
	$k\ne0$, and their derivatives at $t=1$ are continuous in $k$. Hence the
	general coefficient matrix applies at $k=0$ as well, with determinant
	$-1/\sigma^2\ne0$; only $k=-1$ needs the separate treatment above.
	Hence the three ordinary GEV score components are linearly independent
	for every $k<1/2$.
	
	Therefore, for every nonzero vector $\bm a\in\mathbb R^3$,
	\[
	\bm a^T I_1(\thetao)\bm a
	=
	\Ep_{\thetao}
	\left[
	\left\{
	\bm a^T
	\dot{\boldsymbol\ell}_{1,\thetao}(X^{(1)})
	\right\}^2
	\right]
	>0.
	\]
	Thus
	\[
	I_1(\thetao)>0.
	\]
\end{proof}

\section{Hessian bounds and the information identity}
\label{sec:S-hessian}

For $\xv\in S_{r,\thet}$ write $T_j=T_{\thet}(x_j)$, $L_j=\log T_j$,
$D_j=D_{\thet}(x_j)$ and
\[
D^{(2)}_j=D^{(2)}_{\thet}(x_j),\qquad
D^{(2)}_{\thet}(x)=\nabla^2_{\thet}\log T_{\thet}(x).
\]
Differentiating the score \eqref{eq:joint-score} once more, and using
$\nabla_{\thet}T_r=T_rD_r$, gives
\begin{equation}
\ddot{\boldsymbol\ell}_{r,\thet}(\xv)
=\frac r{\sigma^2}\bm e_\sigma\bm e_\sigma^T
-T_r\bigl(D_rD_r^T+D^{(2)}_r\bigr)
+(1-k)\sum_{j=1}^rD^{(2)}_j
-\sum_{j=1}^r\bigl(\bm e_kD_j^T+D_j\bm e_k^T\bigr).
\label{eq:hessian}
\end{equation}
The first term comes from $-r\log\sigma$, the second from $-T_r$, the third
from $(1-k)\sum_j\log T_j$, and the last from differentiating the factor
$(1-k)$ in the third term of \eqref{eq:joint-score} and the factor $\bm e_k$
in its fourth term.

\begin{lemma}[Second derivatives of $\log T_{\thet}$]
\label{lem:hess-D}
Let $0<c\le\sigma\le C$ and $|k|\le\bar k$. For every $x$ with
$w_{\thet}(x)>0$, put $L=\log T_{\thet}(x)$ and
$\kappa=\max\{1,w_{\thet}(x)^{-1}\}$. Then, with constants depending only on
$c$, $C$ and $\bar k$,
\[
\|D_{\thet}(x)\|\le C\,\kappa\,(1+|L|)^2,
\qquad
\|D^{(2)}_{\thet}(x)\|\le C\,\kappa^2(1+|L|)^4 .
\]
\end{lemma}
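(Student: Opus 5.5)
The plan is to write $\log T_{\thet}(x)$ through the integral representation already used in the proof of Lemma~\ref{lem:sqrt-smooth}. With $z=(x-\mu)/\sigma$,
\[
\log T_{\thet}(x)=-\int_0^z(1-kv)^{-1}\,dv ,
\]
and with $w=w_{\thet}(x)=1-kz$ this gives $L=-\log(w)/k$. This formula is real-analytic in $(z,k)$ on $\{1-kz>0\}$, including across $k=0$.

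\emph{Step 1: the first-derivative bound.} Differentiating under this one-dimensional integral, which has fixed limits after rescaling $v=zu$, gives
\[
\partial_\mu L=\sigma^{-1}w^{-1},\qquad \partial_\sigma L=z\,\sigma^{-1}w^{-1}.
\]
Together with $D_k$ from \eqref{eq:D-integral}, this matches \eqref{eq:D}, since $T^{-k}=w^{-1}$. For $\partial_\sigma L$ I will use \eqref{eq:D-integral} rather than the $z$-form: $|D_\sigma|\le \sigma^{-1}|L|\max(1,t^{-k})$. Here $t^{-k}=w^{-1}\le\kappa$ and $t^{-vk}\le\max(1,t^{-k})$. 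The same argument gives $|D_k|\le L^2\kappa/2$ and $|D_\mu|\le\kappa/c$. This yields the first inequality directly, so it is essentially \eqref{eq:D-uniform} with $t^{-k}$ rewritten as $w^{-1}$.

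\emph{Step 2: the second derivatives.} I will differentiate the expressions in \eqref{eq:D-integral} once more with respect to $(\mu,\sigma,k)$, using the chain rule $\nabla_{\thet}L=D$. Each entry of $D^{(2)}$ becomes a finite sum of terms of the form
\[
\sigma^{-a}\,L^{m}\!\int_0^1 p(v)\,e^{-vkL}\,dv
\]
with polynomial weights $p$, multiplied by entries of $D$. For example,
\[
\partial_k D_k=-L^2\int_0^1(1-v)\,e^{-vkL}\bigl(-vL-vkD_k\bigr)\,dv,
\]
and
\[
\partial_\mu D_\mu=-\sigma^{-1}e^{-kL}\,k\,D_\mu=-k\sigma^{-2}w^{-2}.
\]
Each $e^{-vkL}$ factor is bounded by $\kappa$. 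Each factor of $D$ contributes at most $\kappa(1+|L|)^2$, and $|k|\le\bar k$ together with $\sigma\ge c$ absorbs the coefficients. It then remains to count exponents and check that none exceeds $\kappa^2(1+|L|)^4$. The worst terms are $D_k$-type factors times $L^2$ inside the integral, giving $L^2\cdot\kappa\cdot|L|\cdot\kappa(1+|L|)^2$.

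\emph{Step 3: the main obstacle.} The obstacle is exactly this exponent bookkeeping. A naive product such as $\partial_k$ of $L^2\int e^{-vkL}$ produces $L^2\cdot L\cdot D_k\sim L^5$ and so overshoots the power $4$. I would avoid this by differentiating the closed forms
\[
D_\mu=\sigma^{-1}w^{-1},\qquad D_\sigma=\sigma^{-1}(w^{-1}-1)/k,\qquad D_k=\{\log w+(1-w)/w\}/k^2,
\]
using $\partial_\thet w$, together with the identities $(w^{-1}-1)/k=-L\int_0^1 e^{-vkL}\,dv$ and analogous second-order Taylor remainders. These express $D^{(2)}$ as $\kappa^2$ times integrals of $e^{-vkL}$ against $L^{m}$ with $m\le 4$ (for instance $\partial_k^2 L=-L^3\int_0^1(1-v)^2e^{-vkL}\,dv\cdot$const). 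In $w$-form, $\partial_k D_k$ is a third-order Taylor remainder of $\log$, so the power of $|L|$ it produces is at most $3$, and hence at most $4$.

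Finally, the cases $k\ne0$ and $k=0$ are handled uniformly, because every bound involves only integral remainders with no factor $1/k$. The constants depend on $c$, $C$ and $\bar k$ only through $\sigma^{-a}$ and powers of $|k|$.
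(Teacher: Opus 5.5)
Your Step 1, and the method of Step 2 (differentiate \eqref{eq:D-integral} using $\nabla L=D$, bound every $e^{-vkL}$ by $\kappa$ and every factor $D$ by $C\kappa(1+|L|)^2$), are exactly the paper's proof, and that method already proves the lemma. The problem is that you then conclude it overshoots and switch to Step 3. That overshoot comes from a bookkeeping error.

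Differentiating $D_k=-L^2\int_0^1(1-v)e^{-vkL}\,dv$ gives a \emph{sum}:
\[
\nabla D_k=-2L\int_0^1(1-v)e^{-vkL}\,dv\;D+L^2\int_0^1(1-v)v\,e^{-vkL}\,dv\;(L\bm e_k+kD).
\]
Here $\nabla(kL)=L\bm e_k+kD$ enters additively. Your displayed $\partial_kD_k$ also omits the first summand, which comes from differentiating $L^2$. The three pieces are bounded by $C\kappa^2(1+|L|)^3$, $\kappa|L|^3$ and $C\bar k\,\kappa^2L^2(1+|L|)^2$ respectively. So the factor multiplying $L^2D$ is $k$ (absorbed into $\bar k$), not $L$, and no term like $L^2\cdot|L|\cdot D_k\sim L^5$ ever occurs. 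The same count gives $\nabla D_\mu=O\{\kappa^2(1+|L|)^2\}$ and $\nabla D_\sigma=O\{\kappa^2(1+|L|)^3\}$, which completes the bound $\|D^{(2)}_{\thet}(x)\|\le C\kappa^2(1+|L|)^4$.

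Step 3 is unnecessary, and as sketched it is also incorrect.
\begin{itemize}
\item There are sign errors: $L=\log(w)/k$, not $-\log(w)/k$, and $D_k=-\{\log w+(1-w)/w\}/k^2$.
\item The proposed representation $\partial_k^2L=c_0\,L^3\int_0^1(1-v)^2e^{-vkL}\,dv$ cannot hold. Its right side is $O(\kappa|L|^3)$. But at fixed $(\mu,\sigma,x)$,
\[
\partial_k^2L=-2\int_0^z v^2(1-kv)^{-3}\,dv\sim -\frac{w^{-2}}{k^3}\qquad(w\downarrow0,\ k>0\text{ fixed}),
\]
which grows like $\kappa^2$, not $\kappa|L|^3$.
\end{itemize}
In any case, Step 3 never actually produces the bound. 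Drop it and redo the exponent count in Step 2 with sums in place of products; that is the complete argument.
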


\begin{proof}
We use the integral representation \eqref{eq:D-integral}, which is valid
for all $k$, including $k=0$. Since $e^{-kL}=w_{\thet}(x)^{-1}$, every
factor $e^{-vkL}=w_{\thet}(x)^{-v}$ with $0\le v\le1$ is at most $\kappa$.
This gives the bound on $D_{\thet}$, as in \eqref{eq:D-uniform}.

For the second derivatives, differentiate each component of
\eqref{eq:D-integral} and use $\nabla_{\thet}L=D_{\thet}(x)$ and
$\nabla_{\thet}(kL)=L\bm e_k+kD_{\thet}(x)$, so that
$\nabla_{\thet}e^{-vkL}=-v\,e^{-vkL}\{L\bm e_k+kD_{\thet}(x)\}$. Writing
$D=D_{\thet}(x)$, we obtain
\[
\begin{aligned}
\nabla D_\mu&=-\sigma^{-2}e^{-kL}\bm e_\sigma-\sigma^{-1}e^{-kL}(L\bm e_k+kD),\\
\nabla D_\sigma&=\frac{L}{\sigma^2}\int_0^1e^{-vkL}dv\,\bm e_\sigma
-\frac1\sigma\int_0^1e^{-vkL}dv\,D
+\frac L\sigma\int_0^1v\,e^{-vkL}dv\,(L\bm e_k+kD),\\
\nabla D_k&=-2L\int_0^1(1-v)e^{-vkL}dv\,D
+L^2\int_0^1(1-v)v\,e^{-vkL}dv\,(L\bm e_k+kD).
\end{aligned}
\]
Differentiation under the integral sign is legitimate here because the
integrands are smooth on the compact interval $[0,1]$. Each term on the
right is a product of at most two factors bounded by $\kappa$, a power of
$|L|$ of order at most four, and constants. Hence
$\|D^{(2)}_{\thet}(x)\|\le C\kappa^2(1+|L|)^4$.
\end{proof}

Combining \eqref{eq:hessian} with Lemma~\ref{lem:hess-D}, and writing
$\kappa_j=\max\{1,w_{\thet}(x_j)^{-1}\}$, we obtain the pointwise bound
\begin{equation}
\|\ddot{\boldsymbol\ell}_{r,\thet}(\xv)\|
\le C\,(1+T_r)\sum_{j=1}^r\kappa_j^2(1+|L_j|)^4,
\qquad\xv\in S_{r,\thet},
\label{eq:hessian-pointwise}
\end{equation}
uniformly over $\thet$ in compact subsets of $\{\sigma>0\}$.

We now prove Lemmas~\ref{lem:density-derivatives} and~\ref{lem:ac-endpoint},
with $\dot p_{\thet,i}$, $\ddot p_{\thet,ij}$ and $V$ as defined
there.

\begin{proof}[Proof of Lemma~\ref{lem:density-derivatives}]
Under the inverse arrival transformation
\[
x_{\thet,j}(t)=\mu+\sigma(1-t_j^k)/k
\]
(with limit $\mu-\sigma\log t_j$ at $k=0$), the law of
$t=(t_1,\ldots,t_r)$ is independent of $\thet$ and has
density $e^{-t_r}$ on $0<t_1<\cdots<t_r$.
The integral representations in \eqref{eq:D-integral}
and Lemma~\ref{lem:hess-D} imply that on $V$ the
transformed squared score and absolute Hessian
are bounded by
\[
C(1+t_r)^M(1+t_1^{-2k_+})
(1+|\log t_1|)^M(1+\log(1+t_r))^M
\]
for some $M<\infty$ and $0<k_+<1/2$.
To make the uniformity explicit, on $V$ choose
$-\bar k\le k\le k_+<1/2$ and $c\le\sigma\le C$.
In arrival coordinates $w_{\thet}(x_{\thet,j}(t))=t_j^k$,
so $\kappa_j=\max(1,t_j^{-k})$. If $0\le k\le k_+$,
then $\kappa_j\le 1+t_1^{-k_+}$; if
$-\bar k\le k<0$, then
$\kappa_j\le 1+t_r^{\bar k}$.
The latter positive powers, together with the $t_r$
factors in the score and Hessian, are absorbed into
$(1+t_r)^M$ by taking $M$ sufficiently large (for
example, $M\ge 4+2\bar k$ for the polynomial factors).
Moreover $|\log t_j|\le |\log t_1|+\log(1+t_r)$.
This is integrable under the arrival law, since
$t_1\sim\operatorname{Exp}(1)$ and $t_r-t_1$ is
independent of $t_1$ with a Gamma law (degenerate
at zero if $r=1$). The transformed score and Hessian
are continuous in $\thet$ for each fixed $t$.
Dominated convergence therefore gives convergence
of the integrals of the positive and negative parts
of $\dot p_{\thet,i}$ and $\ddot p_{\thet,ij}$.
Their zero extensions also converge almost everywhere
in $x$ along every sequence $\thet_m\to\thet\in V$: a point
in the interior of $S_{r,\thet}$ lies in $S_{r,\thet_m}$ for
all large $m$, where continuity applies; a point outside
the closure of $S_{r,\thet}$ lies outside $S_{r,\thet_m}$ for
all large $m$; and the boundary of $S_{r,\thet}$ is a Lebesgue
null set. Scheff\'e's lemma, in the form of
\citet[Proposition~2.29]{vanderVaart1998}, applied to the
positive and negative parts separately, proves
the asserted $L^1$ continuity.
\end{proof}
\begin{proof}[Proof of Lemma~\ref{lem:ac-endpoint}]
Fix a strictly ordered $\xv$ and a coordinate segment
$\thet(s)=\thet+s\bm e_i\subset V$. Put
$\pi_j(s)=\sigma(s)-k(s)(x_j-\mu(s))$.
Each $\pi_j$ is affine along this coordinate segment. It cannot
vanish identically: in the location direction its slope is $k$
(and $k=0$ gives $\pi_j=\sigma>0$); in the scale direction its
slope is $1$; in the shape direction its slope is $-(x_j-\mu)$
(and $x_j=\mu$ gives $\pi_j=\sigma>0$).
Consequently the segment has at most $r$ boundary crossings.
At any crossing $s_*$, exactly one $\pi_j$ vanishes, since
$k(s_*)\ne0$ and the $x_j$ are distinct. The active coordinate
is $j=1$ when $k(s_*)>0$ and $j=r$ when $k(s_*)<0$.
The nonvanishing factors in the density and its first two
parameter derivatives are bounded locally (with constants
allowed to depend on $\xv$). Since the active $\pi_j$ has a
simple zero and $\sigma$ stays bounded away from zero,
there are $c_1,C_1>0$ such that, on the supported side,
\[
 c_1u\le b(s):=w_{\thet(s)}(x_j)\le C_1u,
 \qquad u=|s-s_*|.
\]
If $k(s_*)>0$, shrink the neighborhood so that
$0<k(s)\le k_+<1/2$, $k(s)\ge k(s_*)/2$, and $b(s)\le1/2$.
Writing $t=b^{1/k(s)}$ and $L=\log t$ gives
$|L|\le 2|\log b|/k(s_*)$.
Here the active coordinate is $x_1$, and
\[
p_{r,\thet(s)}(\xv)=\sigma(s)^{-r}\exp\{-T_r\}\,T_1^{1-k(s)}\prod_{j=2}^rT_j^{1-k(s)},
\qquad T_1=b^{1/k(s)},
\]
where $T_j=T_{\thet(s)}(x_j)$. The factors with $j\ge2$ (absent if $r=1$) are
bounded, because $x_2,\ldots,x_r$ stay in the interior of their supports, and
$e^{-T_r}\le1$; moreover $T_1^{1-k(s)}=b^{1/k(s)-1}$. The density formula, the
score formula and Lemma~\ref{lem:hess-D} then yield
\[
 p_{r,\thet(s)}(\xv)\le C b^{1/k(s)-1},\qquad
 \|\dot{\boldsymbol\ell}_{r,\thet(s)}(\xv)\|
 \le C b^{-1}(1+|\log b|)^2,
\]
\[
 \|\ddot{\boldsymbol\ell}_{r,\thet(s)}(\xv)\|
 \le C b^{-2}(1+|\log b|)^4.
\]
In particular, the second density derivative is bounded by
\[
\begin{aligned}
 |\ddot p_{\thet(s),ij}(\xv)|
 &=p_{r,\thet(s)}(\xv)
   |\partial^2_{ij}\ell_{r,\thet(s)}(\xv)
    +\partial_i\ell_{r,\thet(s)}(\xv)
     \partial_j\ell_{r,\thet(s)}(\xv)|\\
 &\le p_{r,\thet(s)}(\xv)
  \bigl(\|\ddot{\boldsymbol\ell}_{r,\thet(s)}(\xv)\|
       +\|\dot{\boldsymbol\ell}_{r,\thet(s)}(\xv)\|^2\bigr)\\
 &\le C b^{1/k(s)-3}(1+|\log b|)^4.
\end{aligned}
\]
The factor $p_{r,\thet(s)}$ multiplies \emph{both} terms
in the middle line. Since $1/k(s)\ge1/k_+$, and
$b(s)\asymp u$, this implies
\[
 |\ddot p_{\thet(s),ij}(\xv)|
 \le C u^{1/k_+-3}(1+|\log u|)^4.
\]
Likewise,
\[
 p_{r,\thet(s)}(\xv)\le C u^{1/k_+-1},\qquad
 |\dot p_{\thet(s),i}(\xv)|\le
 C u^{1/k_+-2}(1+|\log u|)^2.
\]
Both zero-extended functions tend to zero at the crossing,
and the derivative of $q$ is locally integrable, since
\[
 \int_0^\delta u^{1/k_+-3}(1+|\log u|)^4\,du<\infty
 \qquad(k_+<1/2).
\]
This also covers variation of the shape parameter: the
additional logarithmic terms arising from differentiating
$1/k(s)$ are included in the displayed bounds.

If $k(s_*)<0$, choose a neighborhood on which
$-\bar k\le k(s)\le k(s_*)/2<0$ and $b(s)\le 1/2$.
The same coordinatewise crossing calculation gives
$b(s)\asymp u$ (in the shape direction the supported side
is $k(s)>k(s_*)$). Set
$T=b(s)^{-1/|k(s)|}\to\infty$. Then
$\kappa=b^{-1}=T^{|k(s)|}\le T^{\bar k}$ and
$|\log T|=\log T$. The other observation coordinates
remain in the interior of their supports. Here the active
coordinate is $x_r$, so $T_r=T$, and
\[
p_{r,\thet(s)}(\xv)=\sigma(s)^{-r}e^{-T}\,T^{1-k(s)}\prod_{j=1}^{r-1}T_j^{1-k(s)}.
\]
Since $1-k(s)=1+|k(s)|\le1+\bar k$ and $T\ge1$, the factor $T^{1-k(s)}$ is at most
$(1+T)^{1+\bar k}$. The factors with $j\le r-1$ (absent if $r=1$) are bounded,
because $x_1,\ldots,x_{r-1}$ stay in the interior of their supports; one may
also use $T_j\le T$, which only increases the power of $1+T$ below. In the score
\eqref{eq:joint-score}, the term $T_rD_r$ is bounded by $CT^{1+\bar k}(1+\log T)^2$,
because $\kappa_r=b^{-1}=T^{|k(s)|}$ and $|L_r|=\log T$ in Lemma~\ref{lem:hess-D};
the remaining terms are of lower order. Hence, by the density, score and Hessian
formulas,
\[
\begin{aligned}
 p_{r,\thet(s)}(\xv)&\le Ce^{-T}(1+T)^{1+\bar k},\\
 \|\dot{\boldsymbol\ell}_{r,\thet(s)}(\xv)\|
   &\le C(1+T)^{1+\bar k}(1+\log T)^2,\\
 \|\ddot{\boldsymbol\ell}_{r,\thet(s)}(\xv)\|
  +\|\dot{\boldsymbol\ell}_{r,\thet(s)}(\xv)\|^2
   &\le C(1+T)^{2+2\bar k}(1+\log T)^4.
\end{aligned}
\]
Consequently, with $N=3+3\bar k$ and
$C_N=\sup_{T\ge1}e^{-T/2}(1+T)^N(1+\log T)^4<\infty$,
\[
 |p_{r,\thet(s)}(\xv)|+\sum_i|\dot p_{\thet(s),i}(\xv)|
 +\sum_{i,j}|\ddot p_{\thet(s),ij}(\xv)|
 \le Ce^{-T}(1+T)^N(1+\log T)^4
 \le CC_Ne^{-T/2}.
\]
Thus the zero extensions and their derivatives vanish at
the crossing and the derivatives are locally integrable.
At $k(s_*)=0$ no fixed finite observation lies on a support
boundary, because $\pi_j(s_*)=\sigma(s_*)>0$.

On each of the finitely many open subintervals between
crossings, the zero extensions are either smooth or identically
zero. The preceding bounds give continuity at every crossing
and integrability of the derivatives. Applying the fundamental
theorem of calculus on compact subintervals, passing to the
endpoints by absolute integrability, and adding the resulting
identities proves absolute continuity on the entire original
coordinate segment, with the stated derivatives almost everywhere.
\end{proof}

\bibliographystyle{spbasic}
\bibliography{rGEV_A_arXiv_v3}

\end{document}